
\documentclass[12pt]{amsart}
\usepackage[T1]{fontenc}
\usepackage[utf8]{inputenc}
\usepackage{color,amssymb,amsmath,mathrsfs,enumerate}
\usepackage{hyperref}
\usepackage{tikz}

%






\newtheorem{theorem}{Theorem}
\newtheorem{lemma}[theorem]{Lemma}
\newtheorem{corollary}[theorem]{Corollary}
\newtheorem{prop}[theorem]{Proposition}

\theoremstyle{definition}
\newtheorem{definition}[theorem]{Definition}

\newtheorem*{acknowledgements*}{Acknowledgements}
\theoremstyle{remark}
\newtheorem{remark}[theorem]{Remark}


\usepackage{color}


\setlength{\textwidth}{15.5cm}


%

\newcommand{\R}{\mathbb R}

\newcommand{\T}{\mathbb T}

\newcommand{\D}{\mathbb D}
\def\he{{H}(X)}
\def\hf{H(\Lambda(\varphi))}
\def\laf{\Lambda(\varphi)}
\def\h{H^1(\D)}


\newcounter{obs}

      \renewcommand{\emptyset}{\varnothing}



\begin{document}

\title
[On extreme points of the unit ball]
{On extreme points of the unit ball\\ of a Hardy-Lorentz space}

\author{Sergey V. Astashkin}
\address{Department of Mathematics, Samara National Research University, Moskovskoye shosse 34, 443086, Samara, Russia; Lomonosov Moscow State University, Moscow, Russia; Moscow Center of Fundamental and Applied Mathematics, Moscow, Russia; Department of Mathematics, Bahcesehir University, 34353, Istanbul, Turkey.}
\email{astash56@mail.ru}
\urladdr{www.mathnet.ru/rus/person/8713}
\thanks{$^\dagger$\,This work was performed at Lomonosov Moscow State University  and was supported by the Russian Science Foundation, project no. 23-71-30001.}

\maketitle

\date{\today}


\begin{abstract}
We investigate the problem of a characterization of extreme points of the unit ball of a Hardy-Lorentz space $\hf$, posed by Semenov in 1978.
New necessary and sufficient conditions, under which a normalized function $f$ in $\hf$ belongs to this set, are found. The most complete results are obtained in the case when $f$ is the product of an outer analytic function and a Blaschke factor.

\end{abstract}

Primary classification: 46E30

Secondary classification(s): 30H10, 30J05, 46A55, 46B22 

\keywords{extreme point, symmetric space,  rearrangement, Lorentz space, Hardy-Lorentz space, $H^p$ space,  inner function, outer function, Blaschke product
}

\maketitle


\section{Introduction}
\label{S0}

Let $X$ be a Banach space. By ${\rm ball}\,(X)$ we denote the closed unit ball of  $X$, i.e., ${\rm ball}\,(X):=\{x\in X:\,\|x\|\le 1\}.$ An element $x_0\in {\rm ball}\,(X)$ is called an {\it extreme point} of this ball, if there is no non-degenerate segment contained in ${\rm ball}\,(X)$ for which $x_0$ is the midpoint.

Describing the set of extreme points of the unit ball of a Banach space is a classical problem in geometric theory of function spaces. In 1958, de Leeuw and Rudin gave the following remarkable characterization of extreme points of the unit ball
of the Hardy space $H^1:=\h$, where $\D:=\{z\in \mathbb{C}:\,|z|<1\}$ (see \cite[Theorem 1]{deleeuw-rudin} or \cite[Chapter~9]{hoffman}). 

{\bf Theorem A (de Leeuw-Rudin).} {\it
A function in $H^1$ is an extreme point of the unit ball of $H^1$ if 
and only if it is an outer function of norm one.
} 

The main motivation for this paper, as well as for the author's previous work \cite{A-prep}, was the open problem of characterization of extreme points of the unit ball of a Hardy-Lorentz space $\hf$. It was posed by Semenov in 1978 as a part of a collection of 99 problems in linear and complex analysis
from the  Leningrad branch of the Steklov Mathematical Institute  \cite[5.1, pp.23-24]{khavin-1978}. Later on, the same problem appeared in the subsequent list of problems in 1984 \cite[1.6, pp.22-23]{khavin-1984} and in 1994 \cite[1.5, p.12]{khavin-1994}. 

Note that the space $H^1$ belongs to the family of Hardy-Lorentz spaces, and
the desire to find out whether an extension of de Leeuw-Rudin theorem to Hardy-Lorentz spaces holds has been so undoubtedly one of the reasons to consider the above problem. Another reason was probably a simple structure of the set of extreme points of a Lorentz space (see Subsect. \ref{bryskin-sedaev}  below), which has been used often when estimating norms of operators in $L^p$ and, more generally, in symmetric spaces.

In 1974, de Leeuw-Rudin theorem was partially extended by Bryskin and Sedaev to more general Hardy-type spaces $\he$, where $X$ is a symmetric space on $[0,2\pi]$ (the preliminaries and, in particular, a more detailed account on Bryskin-Sedaev results can be found in the next section). Based on an inspection of the proof of de Leeuw-Rudin theorem, they proved that each normalized outer function from the space $\he$ is an extreme point of the unit ball of $\he$ whenever the norm of the underlined symmetric space $X$ is strictly monotone \cite[Theorem 1]{bryskin-sedaev} (in particular, this holds for a Hardy-Lorentz space $H(\Lambda(\varphi))$ if the function $\varphi$ is strictly increasing).
However, as was  proved in \cite{A-prep} (see Theorem 13), $H^1$ is a  unique space in this class such that the set of all extreme points  of its unit ball {\it coincides} with the set of all normalized outer functions.

Moreover, in \cite{A-prep}, the problem was resolved for the class of functions $f\in H(\Lambda(\varphi))$ such that the absolute value of the nontangential limit $f(e^{it}):=\lim_{z\to e^{it}}f(z)$ is constant on some subset of positive measure of the interval $[0,2\pi]$. Namely, such a  function of norm one turns to be an extreme point of the unit ball of ${\hf}$ if $\varphi$ is strictly increasing and strictly  concave on $[0,2\pi]$ \cite[Theorem 14]{A-prep}. Clearly,  this class contains all inner functions (see also the recent note \cite{CC}). At the same time, there are functions, satisfying the above condition, that are neither inner nor outer.

Thus, the set of extreme points of the ${\rm ball}\,(H(\Lambda(\varphi)))$ contains all normalized outer and inner functions whenever the function $\varphi$ is strictly increasing and strictly concave. Recalling that every $H^1$-function $f$ has the canonical representation $f=F\cdot I$, where $F$ is an outer, and $I$ is an inner functions, we see that the above problem reduces to investigation what products of inner and normalized outer factors are extreme points of the ${\rm ball}\,(H(\Lambda(\varphi)))$.

It is worth to mention here some recent results, closely related to de Leeuw-Rudin theorem and devoted to investigation of the structure of extreme points of the unit ball for other types of function spaces, which can be treated also as a generalization of the classical $H^1$. In particular, in the paper \cite{dyak},  a similar study was carried out for a punctured Hardy space $H^1_K$ consisting of  integrable functions $f$ on the unit circle whose Fourier coefficients $c_k(f)$ vanish either $k<0$ or $k\in K$ ($K$ is a fixed finite set of positive integers) (see also \cite{dyak2}).

Let us describe briefly the structure of the paper. In Section \ref{S1}, we gather some definitions and known results used in the sequel. In particular, in Subsect. \ref{Hardy-type spaces} and \ref{bryskin-sedaev} we recall the definition and properties of the Hardy-type spaces of analytic functions and especially of Hardy-Lorentz spaces as well as results related to the structure of the sets of extreme points of their unit balls.

Section \ref{S2} contains auxiliary results which we repeatedly use further. In Section \ref{S3}, we introduce some special multiplier classes, generated by $H^1$-functions. These classes are directly related to the description of extreme points of the unit ball of a Hardy-type space and have been  actually arisen implicitly already in the proof of Theorem A (see also \cite[Lemma~2.1]{dyak}).

In the next section, assuming that $\varphi$ is a strictly increasing and strictly  concave function on $[0,2\pi]$, we obtain necessary and sufficient conditions under which a function of norm one in the space ${\hf}$ is an extreme point of the unit ball of this space (see Theorem \ref{propos2}).
In  Theorem \ref{Th3a}, we specify this result by showing how the answer to the question of whether a given function is an extreme point of the ${\rm ball}\,(H(\Lambda(\varphi)))$ depends on the structure of the set of inner factors of $f$.

Section \ref{MAIN} contains the main results of the paper. First, we consider the situation when the decreasing rearrangement $\mu^*$ of the function $\mu:=|f(e^{it})|$ may be represented in the form $\mu^*=\mu(\omega)$, where $\omega:\,[0,2\pi]\to [0,2\pi]$ is a one-to-one measure-preserving mapping. Based on technical Proposition \ref{Th4a new}, we show that the set of "critical"\:(in some sense) points of the function $\mu^*$ satisfies certain conditions whenever the corresponding function $f$ is an extreme point of the unit ball of the space ${\hf}$
(see Theorems \ref{cor Th4a new} and \ref{Th4a general}).
As a consequence, in Corollaries \ref{ext of B-S-general} and \ref{ext of B-S}, we obtain new broad sufficient conditions imposed only on $\mu^*$, under which a normalized non outer function $f\in{\hf}$ is not an extreme point of the ${\rm ball}\,(\hf)$ (independently of its inner factors).

Finally, applying the results obtained, we give, in Theorem \ref{Th4a}, a criterion for a function $f$ of the form $f=F\cdot I_a$, where $F$ is an outer normalized function and $I_a$ is a Blaschke factor, to be an extreme point of the unit ball of the space ${\hf}$. These results show that geometric properties of such a function, under the additional assumption that the function $|\tilde{F}|$ decreases, depend, first of all, on the number and the structure of "critical"\:points of $|\tilde{F}|$ and, secondarily, on the choice of the point $a\in\D$ (see Corollary \ref{cor for Th4a}).



\section{Preliminaries.}
\label{S1}


\subsection{Symmetric spaces.}

A detailed exposition of the theory of symmetric spaces see in \cite{bennett-sharpley,krein-petunin-semenov,LT2}.

Let $T>0$. A Banach space $X:=X[0,T]$ of complex-valued Lebesgue-measurable functions on
the measure space $([0,T],m)$, where $m$ is the Lebesgue measure on the interval $[0,T]$, is called {\it symmetric} (or {\it rearrangement invariant}) if from the conditions $y \in X$ and
$x^*(t)\le y^*(t)$ almost everywhere (a.e.) on $[0,T]$ it follows that $x\in X$ and ${\|x\|}_X \le {\|y\|}_X.$ Here, $x^*(t)$ is the left-continuous, decreasing, nonnegative {\it rearrangement} of a measurable function $x$ defined by 
$$
x^{*}(t):=\inf \{ \tau\ge 0:\,m\{s\in [0,T]:\,|x(s)|>\tau\}<t \},\;\;0<t\le T.$$

Two measurable functions  $x$ and $y$ on $[0,T]$ are said to be {\it equimeasurable} whenever 
$$
m\{s\in [0,T]:\,|x(s)|>\tau\}=m\{s\in [0,T]:\,|y(s)|>\tau\},\;\;\tau>0.$$
In particular, a measurable function $x$ and its rearrangement $x^*$ are  equimeasurable. 
Clearly, if functions $x$ and $y$ are equimeasurable on $[0,T]$, then $x^*(t)=y^*(t)$, $0<t\le T$. Consequently, if $X$ is a symmetric space on $[0,T]$,
$x$ and $y$ are equimeasurable, $y\in X$, then $x\in X$ and $\|x\|_X=\|y\|_X.$

Recall that a mapping  $\omega:\,[0,T]\to [0,T]$ is called {\it measure-preserving}  if for every measurable set $E\subset [0,T]$ the inverse image $\omega^{-1}(E)$ is also measurable and $m(\omega^{-1}(E))=m(E)$. Given measure-preserving mapping $\omega$ and measurable on $[0,T]$ function $x(t)$, the function $x_\omega(t)=x(\omega(t))$ is also measurable on $[0,T]$ and $(x_\omega)^*(t)=x^*(t)$, $0<t\le T$. Therefore, if $X$ is a symmetric space, $x\in X$, then $x_\omega\in X$ and $\|x_\omega\|_X=\|x\|_X$ for such a mapping $\omega$.

Every symmetric space $X$ on $[0,T]$ satisfies  the continuous embeddings 
\begin{equation}
\label {embed1}
L^\infty[0,T]{\subset} X{\subset} L^1[0,T].
\end{equation}

If $X$ is a symmetric space on $[0,T]$, then the {\it K\"{o}the dual} 
(or {\it associated}) space $X'$ consists of all measurable functions $x$ on $[0,T]$ such that
$$
\|x\|_{X'}:=\sup\Bigl\{\int_0^T x(t)y(t)\,dt:\;\;
\|y\|_{X}\,\leq{1}\Bigr\}<\infty.
$$
One can easily check that $X'$ is also a symmetric space with respect to the norm $x\mapsto \|x\|_{X'}$, $X$ is continuously embedded into its second 
K\"{o}the dual $X''$ and $\|x\|_{X''}\le\|x\|_X$ for $x\in X$. 

A symmetric space $X$ {\it has the Fatou property} (or is {\it maximal}) if from $x_n\in X,$ $n=1,2,\dots,$ $\sup_{n=1,2,\dots}\|x_n\|_X<\infty$ and $x_n\to{x}$ a.e. on $[0,T]$ it follows that $x\in X$ and $||x||_X\le \liminf_{n\to\infty}{||x_n||_X}.$ Note that a symmetric space $X$ has the Fatou property if and only if $X''=X$ isometrically (see, e.g., \cite[Theorem~6.1.7]{KA}). 

The most familiar symmetric spaces are the $L^p$ spaces with the usual norm
\[
 \|x\|_{L^p}:=
 \left\{ 
 \begin{array}{ll}
    \Big(\int\limits_0^T|x(t)|^p\,dt\Big)^{1/p}, & 1\leq p<\infty\\
    \displaystyle
     \inf\limits_{m(E)=0}\sup\limits_{t\in [0, T]\setminus E} 
      |x(t)|, & p=\infty.
 \end{array}
 \right.
\]
Also, there are many other classes of symmetric spaces appearing in analysis; in particular, Orlicz, Lorentz, Marcinkiewicz spaces. Next, we will be interested primarily in Lorentz spaces, which are a natural generalization of the $L^1$ space.

Let $\varphi(t)$ be an increasing and concave function on $[0,T]$ such that $\varphi(0)=0$ and $\varphi(T)=1$. The {\it Lorentz space} $\Lambda(\varphi)$ is the set of all measurable functions $x$ on $[0,T]$, for which we have
\[
 \|x\|_{\Lambda(\varphi)}:=\int \limits_0^T  x^*(t)\,d\varphi(t)<\infty.
\]
In particular, if $\varphi(t)=T^{-1/p}t^{1/p}$, $1<p<\infty$, this space is denoted, as usual, by $L^{p,1}$ (see, e.g., \cite[\S\,4.4]{bennett-sharpley} or \cite[\S\,II.6.8]{krein-petunin-semenov}).  In the case when the function $\varphi$ is discontinuous at $0$ (resp. $\liminf\limits_{t\to 0} \varphi(t)/t<\infty$), up to equivalence of norms, we have: $\Lambda(\varphi)=L^\infty$ (resp. $\Lambda(\varphi)=L^1$).

The space $\Lambda(\varphi)$  has the Fatou property for every function $\varphi$.
If $\varphi(t)$ is continuous at $0$, then $\Lambda(\varphi)$ is separable and its K\"{o}the dual is the {\it Marcinkiewicz space} $M(\varphi)$ with the norm
\[
 \|x\|_{M(\varphi)}: =\sup\limits_{0<s\leq T} \frac1{\varphi(s)}\int\limits_0^s x^*(t)\,dt
\]
\cite[Theorem~II.5.2]{krein-petunin-semenov}.


 
\subsection{Hardy-type spaces of analytic functions.}
\label{Hardy-type spaces}

Let $X$ be a symmetric space with the Fatou property on the unit circle
$$
\T:=\{e^{it}:\, 0\le t<2\pi\},$$ 
which will be identified further with the interval $[0,2\pi)$.
The Hardy-type space $\he$ consists of all analytic functions $f\colon\D\to\mathbb{C}$, where $\D:=\{z\in \mathbb{C}:\,|z|<1\}$, such that
\begin{equation*}
\|f\|_{\he}:=\sup_{0\le r<1}\|f_r\|_X<\infty.
\end{equation*}
Here, $f_r(t):=f(re^{it})$ if $0\le r<1$ and $t\in[0,2\pi)$ (see \cite{bryskin-sedaev}). In particular, if $X=L^p$, $1\le p\le\infty$, we get the $H^p$-spaces, $H^p=H(L^p)$.  For matters related to this classical case we refer the reader to the books  \cite{duren,hoffman,koosis}.  

According to embeddings \eqref{embed1}, for every symmetric space $X$, we have $\he\subseteq H^1$. Consequently, for each function $f\in\he$ and almost all $t\in[0,2\pi)$ there exists the nontangential limit $f(e^{it}):=\lim_{z\to e^{it}}f(z)$. Moreover, precisely as in the classical $H^p$ case, the mapping  
\begin{equation}
\label{map} 
f\mapsto \tilde{f}:=f(e^{it})
\end{equation}
is an isometry between $\he$ and the subspace of $X$, consisting of all functions $g\in X$ such that the Fourier coefficients 
$$
c_k(g):=\frac{1}{2\pi}\int_0^{2\pi} g(t)e^{-ikt}\,dt,\;\;k\in\mathbb{Z},
$$
vanish if $k<0$.

Let us recall now the definition of inner and outer analitic functions. A function $I\in H^\infty$ is called {\it inner} if $|\tilde{I}(t)|=1$ a.e. on $[0,2\pi)$.

An important class of inner functions is formed by the so-called Blaschke products. For every $a\in\mathbb{D}$ we put
\begin{equation}
\label{factor}
I_a(z)=z\;\;\mbox{if}\;\;a=0,\;\;\mbox{and}\;\;I_a(z):=\frac{|a|}{a}\cdot\frac{a-z}{1-\bar{a}z}\;\;\mbox{if}\;\;a\ne 0.
\end{equation}
If $J\subset\mathbb{N}$ and $\{a_i\}_{i\in J}$ is a subset of the unit disk $\mathbb{D}$ satisfying the condition
\begin{equation*}
\sum_{i\in J} (1-|a_i|)<\infty,
\end{equation*}
the product $B_J:=\prod_{i\in J} I_{a_i}$ (called the {\it Blaschke product}) converges in $\mathbb{D}$, $B_J$ is an analytic function in $\mathbb{D}$, $|B_J(z)|\le 1$, $z\in \mathbb{D}$, and $|\tilde{B}_{J}(t)|=1$ for almost all $t\in[0,2\pi)$.

A non-null function $F\in H^1$ is termed {\it outer} if
$$
\ln |F(0)|=\frac{1}{2\pi}\int_{0}^{2\pi}\ln |\tilde{F}(s)|\,ds.$$
An outer function $F$ from $H^1(\D)$ can also be characterized by the following extremality property: if $g\in H^1(\D)$ and $\tilde{g}(t)\le \tilde{F}(t)$, $t\in [0,2\pi)$, then $|g(z)|\le |F(z)|$ for all $z\in\D$ (see, e.g., \cite[chapter~9]{hoffman}).

It is well known the following canonical (inner-outer) factorization theorem: every function $f\in H^1$, $f\not\equiv 0$, can be uniquely (up to a factor equal to one in modulus) represented as the product $f=IF$, where $I$ is an inner and $F$ is an outer functions (see, for  instance, \cite[\S\,IV.D.$4^0$]{koosis}).


\subsection{Extreme points and some related definitions and results.}
\label{bryskin-sedaev}

Let $X$ be a Banach space. As was mentioned above, a point $x_0\in {\rm ball}\,(X)$ is said to be {\it extreme} for the ${\rm ball}\,(X)$ if $x_0$ is not the midpoint of any non-degenerate segment contained in ${\rm ball}\,(X)$.  

Clearly, every extreme point $x_0$ of ${\rm ball}\,(X)$ is an element of norm one. Moreover, $x_0$ is an extreme point of ${\rm ball}\,(X)$ if and only if from $y\in X$ and $\|x\pm y\|\le 1$ it follows that $y=0$. 


As was said above, we will be interested in the structure of the set of extreme points of the unit ball of a Hardy-Lorentz space $H(\Lambda(\varphi))$. Therefore, it is worth to mention that analogous set for a Lorentz space $\Lambda(\varphi):=\Lambda(\varphi)[0,T]$ is contained in the set of functions 
$$f=\frac{\epsilon(t)}{\varphi(m(E))}\chi_E(t),$$
where $E$ is a subset of $[0,T]$ of positive measure and a measurable function $\epsilon:\,[0,T]\to \mathbb{C}$ satisfies the condition: $|\epsilon(t)|=1$ a.e. on $E$. Moreover, if $\varphi$ is strictly concave on $[0,T]$, then these two sets coincide (see  \cite{carothers-turett} and \cite{carothers-haydon-lin}).
Since ${\hf}$ is isometric to a subspace of $\Lambda(\varphi)$, from this result it follows immediately that every inner  function $f$ is an extreme point of the ${\rm ball}\,(H(\Lambda(\varphi)))$ (because $|\tilde{f}|=1$ a.e. on $[0,2\pi]$ and hence $\tilde{f}$ is an extreme point of the ${\rm ball}\,(\Lambda(\varphi))$).

Recall that a real-valued function $\psi$ defined  on $[0,T]$ is  called {\it strictly concave} if 
$$
\psi((1-\alpha)t_1+\alpha t_2)>(1-\alpha)\psi(t_1)+\alpha \psi(t_2)$$
for any $\alpha\in (0,1)$ and $0\le t_1,t_2\le T$, $t_1\ne t_2$.

An inspection of the proof of de Leeuw-Rudin theorem (see Lemma 1 in \cite{bryskin-sedaev} or Lemma \ref{l0} below) leads to the following partial extension of Theorem A to some Hardy-type spaces obtained by Bryskin and Sedaev. A symmetric space $X$ on $[0,T]$ is said to have \textit{a strictly monotone norm} if from the conditions: $x,y\in X$, $|x(t)|\le|y(t)|$ a.e. on $[0,T]$ and%
\begin{equation*}\label{1}
m\{t\in [0,T]:\,|x(t)|<|y(t)|\}>0
\end{equation*}
it follows that $\|x\|_X<\|y\|_X$.

{\bf Theorem B \cite[Theorem 1]{bryskin-sedaev}.} {\it If a symmetric space $X$ on $[0,2\pi]$ has a strictly monotone norm, then each normalized outer function from the space $\he$ is an extreme point of the ${\rm ball}\,({\he})$.}

As is easy to verify, a Lorentz space $\Lambda(\varphi)$ on $[0,2\pi]$ has the strictly monotone norm if and only if the function $\varphi$ is strictly increasing on $[0,2\pi]$. Thus, we obtain the following consequence of Theorem B.

{\bf Theorem C \cite[Corollary]{bryskin-sedaev}.} {\it
Let $\varphi$ be a strictly increasing, concave function on $[0,2\pi]$,
$\varphi(0)=0$, $\varphi(2\pi)=1$. Then every normalized outer function in $\hf$ is an extreme point of the ${\rm ball}\,({\hf})$.}

Thus, if $\varphi$ is a strictly increasing and strictly concave function on $[0,2\pi]$), the set of extreme points of the unit ball of a space $\hf$ is rather rich, because it contains all inner and all normalized outer functions. Moreover, as was above-mentioned, there are elements of this set that are neither inner nor outer functions (see \cite{A-prep}). 

However, 
according to the following Bryskin-Sedaev result, not all normalized functions in $\hf$ are extreme points of the unit ball of this space.

{\bf Theorem D \cite[Theorem 2]{bryskin-sedaev}.} {\it
Suppose that $\|f\|_{\hf}=1$ and $f(a)=0$ for some point $a\in\D$. If the function $\mu(t):=|\tilde{f}(t)|$ is continuously differentiable on $[0,2\pi)$ and for some $\delta>0$ the inequality $\mu^\prime(t) \leq -\delta$ holds for all $t\in [0,2\pi)$, then $f$ is not an extreme point of the ${\rm ball}\,({\hf})$.}

Further, in Corollaries \ref{ext of B-S-general} and \ref{ext of B-S} (see also Remark \ref{ext of B-S result}), we get a far-reaching strengthening of Theorem D.

We say that a function $\psi:\, [0,T]\to \mathbb{R}$ is {\it increasing} (resp. {\it strictly increasing}) if from $0\le t_1<t_2\le T$ it follows that $\psi(t_1)\le \psi(t_2)$ (resp. $\psi(t_1)< \psi(t_2)$). A function $\psi:\,[0,T]\to\mathbb{R}$ that becomes  (strictly) increasing after changing on a subset of $[0,T]$ of measure zero will be called (strictly) increasing as well.  A (strictly) decreasing function is defined similarly. 


In what follows, we assume that $\varphi$ is a concave, increasing and continuous function on $[0,T]$ such that $\varphi(0)=0$, $\varphi(T)=1$ (as a rule, $T=2\pi$). Finally, by $\Re\, (w)$ and $\Im\,(w)$ we denote the real and imaginary part of a complex number $w$, respectively.

\vskip 0.4cm


\section{Auxiliary results}
\label{S2}



As was already observed, the original proof of Theorem A (see \cite{deleeuw-rudin})  contains in fact the following useful fact (see Lemma 1 in \cite{bryskin-sedaev}).



\begin{lemma}
\label{l0}
Suppose the norm of a symmetric space $X$ on $[0,T]$ is strictly monotone. If $\|f\|_X=1$, $g\in X$, $m\{t:\,f(t)=0\}=0$, and 
$\|f\pm g\|_X\le 1$, then ${g}=h{f}$, where $h$ is a real-valued function, $|h|\le 1$ a.e. on $[0,T]$.
\end{lemma}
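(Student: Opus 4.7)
The strategy is to reduce the claim to the equality case of the pointwise inequality $2|f|=|(f+g)+(f-g)|\le|f+g|+|f-g|$, upgrading the resulting norm inequality to a pointwise equation by means of the strict monotonicity of the norm, and then to read off from that equation that $g/f$ must be real and bounded by $1$ in modulus almost everywhere.

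First I would introduce the auxiliary function $U(t):=\tfrac12\bigl(|f(t)+g(t)|+|f(t)-g(t)|\bigr)$. The pointwise triangle inequality yields $U(t)\ge|f(t)|$ a.e., while the triangle inequality in $X$ combined with the hypotheses $\|f\pm g\|_X\le 1=\|f\|_X$ gives $\|U\|_X\le\tfrac12(\|f+g\|_X+\|f-g\|_X)\le 1=\|f\|_X$. Strict monotonicity of the norm (applied to $x=|f|$ and $y=U$) now forces $m\{t:|f(t)|<U(t)\}=0$; that is,
$|f(t)+g(t)|+|f(t)-g(t)|=2|f(t)|$ for a.e.\ $t\in[0,T]$.

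It remains to analyze this pointwise identity. Because $f\ne 0$ a.e.\ by hypothesis, the quotient $h(t):=g(t)/f(t)$ is defined almost everywhere, and the task reduces to showing that $h$ is real with $|h|\le 1$. Setting $a=f(t)$, $b=g(t)$, equality in $|a+b|+|a-b|\ge|(a+b)+(a-b)|=2|a|$ holds precisely when $a+b$ and $a-b$ lie on a common ray from the origin; if $a+b=0$ then $b=-a$, while if $a-b=\lambda(a+b)$ for some $\lambda\ge 0$ then $b=\tfrac{1-\lambda}{1+\lambda}\,a$ with coefficient in $(-1,1]$. In either case $h(t)$ is real and lies in $[-1,1]$, which gives $g=hf$ with $|h|\le 1$ a.e., as required.

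The one delicate step is the extraction of the pointwise equation $U=|f|$ from the norm inequality $\|U\|_X\le\|f\|_X$: this rigidity is exactly what the strictly monotone norm hypothesis is designed to provide, and without it the argument cannot conclude. The remaining ingredients—the $X$-norm estimate for $U$ and the characterization of equality in the complex triangle inequality—are elementary.
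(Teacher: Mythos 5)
Your proof is correct, and it is essentially the standard argument this lemma rests on: the paper itself does not prove Lemma \ref{l0} but cites it as Lemma 1 of Bryskin--Sedaev (extracted from de Leeuw--Rudin's proof), where the same two steps appear --- strict monotonicity upgrades the norm inequality for $\tfrac12\bigl(|f+g|+|f-g|\bigr)\ge |f|$ to a pointwise equality, and the equality case of the complex triangle inequality then forces $g/f$ to be real-valued with modulus at most $1$. Nothing is missing; your handling of the degenerate case $f+g=0$ and the reduction $b=\tfrac{1-\lambda}{1+\lambda}a$ covers all cases since $f\ne 0$ a.e.
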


A proof of the following result can be found in \cite[Lemma 6]{CC} (for the special case of the spaces $L^{p,1}$,
$1<p<\infty$, see also   \cite[Lemma 1]{carothers-turett}). 

\begin{lemma}
\label{prop2}
Let $\varphi$ be a strictly increasing and  strictly concave  
function on $[0,T]$. The functions  $f,g \in \Lambda(\varphi)$ satisfy the equality 
$$\|f\|_{\Lambda(\varphi)} + \|g\|_{\Lambda(\varphi)}=\|f+g\|_{\Lambda(\varphi)}
$$
if and only if 
$$ f^* + g^*=(f+g)^*\;\;\mbox{on}\;[0,T]. 
$$
\end{lemma}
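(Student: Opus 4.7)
The ``if'' direction is immediate: integrating the pointwise identity $f^*+g^*=(f+g)^*$ against the measure $d\varphi$ yields the norm equality by linearity. For the ``only if'' direction, my plan is to express the norm gap $\|f\|_{\Lambda(\varphi)}+\|g\|_{\Lambda(\varphi)}-\|f+g\|_{\Lambda(\varphi)}$ as the integral of a non-negative continuous function against a positive measure whose support, by the strict concavity of $\varphi$, is all of $[0,T]$.

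The main analytic tool will be the Hardy--Littlewood running-integral inequality: for every $s\in[0,T]$,
\[
\int_0^s (f+g)^*(t)\,dt \;\le\; \int_0^s f^*(t)\,dt + \int_0^s g^*(t)\,dt.
\]
Set $F(s):=\int_0^s f^*$, $G(s):=\int_0^s g^*$, $H(s):=\int_0^s (f+g)^*$, and $D:=F+G-H$. Then $D$ is continuous on $[0,T]$, $D(0)=0$, and $D\ge 0$ everywhere.

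Since $\varphi$ is concave, increasing and continuous with $\varphi(0)=0$, it is absolutely continuous, so $\varphi'$ is a.e.\ defined, decreasing, and non-negative. Let $\nu$ be the positive Borel measure on $(0,T]$ determined by $\varphi'(t)-\varphi'(T)=\nu((t,T])$. A Fubini computation then yields, for every $h\in\Lambda(\varphi)$,
\[
\|h\|_{\Lambda(\varphi)} \;=\; \varphi'(T)\int_0^T h^*(t)\,dt \;+\; \int_0^T \!\left(\int_0^s h^*(t)\,dt\right) d\nu(s).
\]
Applying this to $f$, $g$ and $f+g$ and subtracting gives
\[
\|f\|_{\Lambda(\varphi)}+\|g\|_{\Lambda(\varphi)}-\|f+g\|_{\Lambda(\varphi)} \;=\; \varphi'(T)\,D(T) \;+\; \int_0^T D(s)\,d\nu(s),
\]
a sum of two non-negative terms. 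Assuming the norm equality, both of them must vanish.

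Here is where strict concavity enters decisively: it forces $\varphi'$ to be strictly decreasing on $(0,T)$, hence $\nu$ charges every non-empty open subinterval of $(0,T]$. Combined with the continuity of $D\ge 0$, this forces $D\equiv 0$ on $[0,T]$. Differentiating $F+G=H$ a.e.\ gives $f^*+g^*=(f+g)^*$ almost everywhere, and the left-continuity of decreasing rearrangements then promotes this to pointwise equality on $(0,T]$, as required. The principal technical obstacle I anticipate is the bookkeeping around the representation of $\varphi'$ as the tail of $\nu$, in particular handling a possible singularity of $\varphi'$ at $0$ and verifying that strict concavity translates precisely into $\nu$ having full support in $(0,T]$; once these are in hand, the remainder is a standard Fubini and Hardy--Littlewood argument.
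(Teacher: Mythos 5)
Your proof is correct and is essentially the paper's own argument (the proof it cites from Carrillo-Alan\'{i}s and Curbera): the same representation of $\|h\|_{\Lambda(\varphi)}$ as $\varphi'(T)\int_0^T h^*(t)\,dt$ plus the integral of $s\mapsto\int_0^s h^*(t)\,dt$ against the positive measure $d(-\varphi')$, the same Hardy--Littlewood subadditivity giving $D\ge 0$, and the same use of strict concavity to ensure that $d(-\varphi')$ charges every subinterval, forcing $D\equiv 0$. The only divergence is minor: you dispose of the boundary $L^1$ term simply by noting it is non-negative, whereas the paper first establishes $\|f\|_{L^1}+\|g\|_{L^1}=\|f+g\|_{L^1}$ via strict monotonicity of the Lorentz norm (i.e.\ via $|f+g|=|f|+|g|$ a.e.); your shortcut is valid and slightly leaner, since it never uses the strict increase of $\varphi$.
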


As is known (see, for instance, \cite[\S\,II.2, the proof of Property $7^0$]{krein-petunin-semenov}, for every function $x\in L_1[0,T]$ there exists a family of measurable subsets $\{E_t(x)\}_{0<t\le T}$ of the interval $[0,T]$ such that $m(E_t(x))=t$, $0<t\le T$, $E_{t_1}(x)\subset E_{t_2}(x)$ if $0<t_1<t_2\le T$, the embeddings   
\begin{equation}
\label{pra1}
\{s\in [0,T]:\,|x(s)|>x^*(t)\}\subset E_t(x)\subset \{s\in [0,T]:\,|x(s)|\ge x^*(t)\},\;\;0<t\le T,
\end{equation}
hold, and
\begin{equation}
\label{pra2}
\int_{E_t(x)}|x(s)|\,ds=\int_0^t x^*(s)\,ds,\;\;0<t\le T.
\end{equation}
Note that, by \eqref{pra2}, the function $F(t):=\int_{E_t(x)}|x(s)|\,ds$
is differentiable a.e. on $[0,T]$ and $F'(t)=x^*(t)$.

The following assertion is proved in \cite[Proposition~7]{A-prep}.

\begin{prop}
\label{propos}
Let $\varphi$ be a strictly increasing and  strictly concave  
function on $[0,T]$. Suppose  $f,g \in \Lambda(\varphi)[0,T]$, $f$ is nonnegative, $g$  is real-valued, $\|f\|_{\Lambda(\varphi)}=1$ and $\|f\pm g\|_{\Lambda(\varphi)}\le 1$. 

Then both functions $f\pm g$  are nonnegative a.e. on the interval $[0,T]$ and the sets $E_t(f)$ and $E_t(f\pm g)$, $0<t\le T$, can be chosen in such a way that
$$
E_t(f+g)=E_t(f-g)=E_t(f),\;\;0<t\le T.$$
Therefore, if $E_t:=E_t(f)$, $0<t\le T$, then simultaneously
\begin{equation}
\label{pr1}
\int_{E_t}f(s)\,ds=\int_0^t f^*(s)\,ds,\;\;0<t\le T,
\end{equation}
and
\begin{equation}
\label{pr2-e}
\int_{E_t}(f(s)\pm g(s))\,ds=\int_0^t (f\pm g)^*(s)\,ds,\;\;0<t\le T.
\end{equation}
\end{prop}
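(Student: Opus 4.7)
The plan is to exploit the equality case in the triangle inequality that is forced by the hypothesis, apply Lemma \ref{prop2} to control rearrangements pointwise, and then translate this information into set-level identities using the Hardy-Littlewood rearrangement inequality together with its equality characterization.

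First, I would observe that the triangle inequality yields
\[
2 = \|2f\|_{\Lambda(\varphi)} = \|(f+g)+(f-g)\|_{\Lambda(\varphi)} \le \|f+g\|_{\Lambda(\varphi)} + \|f-g\|_{\Lambda(\varphi)} \le 2,
\]
so both norms on the right equal $1$, and Lemma \ref{prop2} supplies the pointwise identity
\[
(f+g)^*(t) + (f-g)^*(t) = (2f)^*(t) = 2f^*(t) \qquad \text{on } [0,T].
\]
Setting $E_t := E_t(f)$ and integrating this identity on $[0,t]$, a double application of the Hardy-Littlewood inequality to $h = f \pm g$ gives
\[
\int_{E_t}\!\bigl(|f+g|+|f-g|\bigr)\,ds \le \int_0^t (f+g)^*(s)\,ds + \int_0^t(f-g)^*(s)\,ds = 2\int_0^t f^*(s)\,ds = \int_{E_t} 2f(s)\,ds,
\]
where the last equality is \eqref{pra2}. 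In the opposite direction, since $f \ge 0$, the elementary identity $|f+g|+|f-g| = 2\max(f,|g|) \ge 2f$ forces the reverse inequality as well. Consequently, equality must hold throughout, and in particular $|g| \le f$ a.e. on $E_t$; taking $t = T$ yields $|g| \le f$ a.e., so that $f \pm g \ge 0$ a.e. on $[0,T]$, proving the first claim.

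Equality in the Hardy-Littlewood step moreover means that
\[
\int_{E_t}(f \pm g)(s)\,ds = \int_0^t (f \pm g)^*(s)\,ds
\]
for every $t \in (0,T]$, which, by the standard characterization of equality in this inequality, entails the level-set inclusions
\[
\{s\in[0,T]:\, (f \pm g)(s) > (f \pm g)^*(t)\} \subset E_t \subset \{s\in[0,T]:\, (f \pm g)(s) \ge (f \pm g)^*(t)\}.
\]
Hence $E_t$ is an admissible choice for both $E_t(f+g)$ and $E_t(f-g)$, which lets us set $E_t(f+g) = E_t(f-g) = E_t$, and the identities \eqref{pr1}--\eqref{pr2-e} follow at once from \eqref{pra2} applied in turn to $f$, $f+g$, and $f-g$.

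The main technical obstacle I anticipate is the last step: arguing that the single family $\{E_t\}$ simultaneously plays the role of $\{E_t(f)\}$, $\{E_t(f+g)\}$, and $\{E_t(f-g)\}$ in the sense of the defining inclusions \eqref{pra1} for each, rather than merely matching the integrals. This requires a careful appeal to the equality case in the Hardy-Littlewood inequality and some attention to the (possibly nonempty) set where $(f \pm g)$ attains the value of the corresponding decreasing rearrangement at $t$.
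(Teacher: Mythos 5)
Your proposal is correct in substance, but note that the paper itself gives no proof of Proposition \ref{propos}: it is quoted from \cite[Proposition~7]{A-prep}, so a comparison can only be made with the toolkit the paper deploys for analogous claims. Your opening moves --- forcing $\|f+g\|_{\Lambda(\varphi)}=\|f-g\|_{\Lambda(\varphi)}=1$ via the triangle inequality and invoking Lemma \ref{prop2} with the pair $f+g$, $f-g$ to get $(f+g)^*+(f-g)^*=(2f)^*=2f^*$ --- are surely the same as in the cited source; there is essentially no other way to start. Where you genuinely diverge is in converting this rearrangement identity into the set-level conclusion: the paper's own mechanism for exactly this step (see the proof of Proposition \ref{reduction4}, implication $(iii)\Rightarrow(i)$) is to cite \cite[Chapter~II, Property~$9^0$]{krein-petunin-semenov}, which directly asserts that an equality $x^*+y^*=(x+y)^*$ permits a common choice of the sets $E_t$. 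You instead re-derive this from the equality case of the Hardy--Littlewood inequality, and as a by-product the pointwise identity $|f+g|+|f-g|=2\max(f,|g|)\ge 2f$ gives the nonnegativity claim ($|g|\le f$ a.e.) very cleanly --- arguably more transparently than a citation would. The price is the technicality you yourself flag: the equality case of Hardy--Littlewood yields the inclusions \eqref{pra1} for $f\pm g$ only modulo null sets, and upgrading them to exact inclusions simultaneously for $f$, $f+g$ and $f-g$, for uncountably many $t$, while keeping the family nested, is precisely the content of the cited KPS property and does not follow from a naive countable-union argument. Since everything the paper actually uses downstream is the pair of integral identities \eqref{pr1} and \eqref{pr2-e}, which you establish exactly, this residual gap is harmless in practice; but a fully faithful proof of the statement as worded should either carry out that (routine but fiddly) null-set surgery or simply fall back on \cite[Chapter~II, Property~$9^0$]{krein-petunin-semenov}.
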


Let $g$ be a measurable nonnegative function on $[0,T]$. By Ryff's theorem (see, e.g., \cite[Theorem~2.7.5]{bennett-sharpley}), there exists a measure-preserving mapping $\tau:\,[0,T]\to [0,T]$ such that $g(t)=g^*(\tau(t))$ for a.e. $t\in[0,T]$, where $g^*$ is the decreasing rearrangement of $g$.
Sometimes, conversely, $g^*$ can be expressed as a  composition of $g$ and some measure-preserving mapping\footnote{In general, this is not true (see \cite[Example~2.7.7]{bennett-sharpley}).}. Assume, for instance, that $g$ (or its restriction to some subset of full measure of the interval $[0,T]$) is injective. Then, if $g(t)=g^*(\tau(t))$, where $\tau$ is a measure-preserving mapping of $[0,T]$, $\tau$ may be assumed to be one-to-one. Hence, there exists the inverse mapping $\tau^{-1}:\,[0,T]\to [0,T]$, which also preserves the measure, and  $g^*(t)=g(\tau^{-1}(t))$, $0\le t\le T$.

\begin{prop}
\label{reduction4}
Let $g$ and $h$ be two real-valued measurable functions on $[0,T]$, $g(t)> 0$, $|h(t)|\le 1$.
Suppose that there exists a one-to-one measure-preserving mapping $\omega:\,[0,T]\to [0,T]$ such that $g^*=g(\omega)$. Denote $h_\omega(t):=h(\omega(t))$, $0\le t\le T$.

The following conditions are equivalent:

(i) the sets $E_t(g)$ and $E_t(g\cdot(1\pm h))$, $0<t\le T$, can be chosen in such a way that
\begin{equation}
\label{lem eq}
E_t(g\cdot(1+ h))=E_t(g\cdot(1- h))=E_t(g),\;\;0<t\le T;
\end{equation}

(ii) for almost all $t\in[0,T]$
\begin{equation}
\label{lem eq-dop}
(g(1\pm h))^*(t)=g^*(t)\cdot(1\pm h_\omega(t));
\end{equation}

(iii) the functions $g^*(t)\cdot (1+h_\omega(t))$ and $g^*(t)\cdot (1-h_\omega(t))$ decrease on $[0,T]$.

In addition, we have
\begin{equation}
\label{lem eq2}
\Big(\int_{E_t(g)}g(s)h(s)\,ds\Big)_t'=g^*(t)h_\omega(t)\;\;\mbox{for almost all}\;t\in [0,T].\end{equation}
\end{prop}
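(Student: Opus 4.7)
I would take $E_t(g):=\omega([0,t])$ as the canonical realization of the family $\{E_t(g)\}$ and run the three equivalences through decreasing rearrangements and a change of variables. Since $\omega$ is measure-preserving, $m(\omega([0,t]))=t$; and since $g^*=g\circ\omega$ is decreasing, writing $s=\omega(u)$ turns $g(s)>g^*(t)$ into $g^*(u)>g^*(t)$, forcing $u<t$ and hence $s\in\omega([0,t])$. The reverse inclusion in \eqref{pra1} is analogous, confirming that $\omega([0,t])$ is a valid $E_t(g)$. With this choice, the auxiliary formula \eqref{lem eq2} is immediate: change of variables gives
\[
\int_{E_t(g)} g(s)h(s)\,ds \;=\; \int_0^t g^*(u)h_\omega(u)\,du,
\]
which differentiated at Lebesgue points yields $g^*(t)h_\omega(t)$.

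For (ii) $\Leftrightarrow$ (iii) I would observe that, $\omega$ being one-to-one and measure-preserving, the functions $g(1\pm h)$ and $g^*(1\pm h_\omega)$ are equimeasurable on $[0,T]$ and thus share the same decreasing rearrangement. Hence (ii) is precisely $(g^*(1\pm h_\omega))^*=g^*(1\pm h_\omega)$, i.e.\ that $g^*(1\pm h_\omega)$ is decreasing, which is (iii). The implication (iii) $\Rightarrow$ (i) then follows by taking $E_t:=\omega([0,t])$ and verifying \eqref{pra1} for $g(1\pm h)$: for $s=\omega(u)$ one has $g(s)(1\pm h(s))=g^*(u)(1\pm h_\omega(u))$, which by (iii) is decreasing in $u$, so $g(s)(1\pm h(s))>(g(1\pm h))^*(t)=g^*(t)(1\pm h_\omega(t))$ forces $u<t$ and thus $s\in E_t$; the other inclusion is similar.

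The delicate direction is (i) $\Rightarrow$ (ii). Applying \eqref{pra2} to $g$, $g(1+h)$, $g(1-h)$ with the common family $\{E_t\}$ and combining gives $(g(1+h))^*(t)+(g(1-h))^*(t)=2g^*(t)$ a.e. A measure-preserving $\sigma\colon[0,T]\to[0,T]$ with $E_t=\sigma([0,t])$ up to null sets (obtained as the a.e.-inverse of $\tau(s):=\inf\{t:s\in E_t\}$, which is measure-preserving because $m(E_t)=t$) then satisfies $g\circ\sigma=g^*$, and differentiating $\int_{E_t}g h\,ds=\int_0^t g^*(u)h(\sigma(u))\,du$ gives $(g(1+h))^*(t)=g^*(t)(1+h(\sigma(t)))$ a.e. Thus (ii) reduces to showing $h\circ\sigma=h_\omega$ a.e., which is automatic on intervals where $g^*$ is strictly decreasing, since there $\sigma$ and $\omega$ are forced to agree.

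The main obstacle lies on flat intervals of $g^*$. On such an interval $[a,b]$ at value $c$, I would combine the joint validity of $E_t$ for $g(1+h)$ and $g(1-h)$ with the identity $(g(1+h))^*+(g(1-h))^*=2c$ derived above: the inclusions \eqref{pra1} applied simultaneously to $g^*(1+h_\omega)$ and $g^*(1-h_\omega)$ on $[a,b]$ force $(g(1\pm h))^*$ to be constant on $[a,b]$ and then pin $c(1\pm h_\omega(u))$ to that constant a.e.\ on $[a,b]$. Consequently $h_\omega$ is constant on each flat interval of $g^*$, $h$ is constant on the corresponding level set of $g$, and therefore $h\circ\sigma=h\circ\omega=h_\omega$ a.e., completing (i) $\Rightarrow$ (ii).
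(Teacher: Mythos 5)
Your setup and the easy implications essentially coincide with the paper's proof: the paper also fixes the canonical choice $E_t(g)=\omega((0,t))$, derives \eqref{lem eq2} by the same change of variables, gets (ii)$\Rightarrow$(iii) from the fact that a rearrangement is decreasing, and proves (iii)$\Rightarrow$(i) (the paper does this by invoking the Krein--Petunin--Semenov property that $(x+y)^*=x^*+y^*$ allows a common choice $E_t(x)=E_t(y)$, whereas you verify \eqref{pra1} directly; both are fine). The serious divergence is (i)$\Rightarrow$(ii), where you miss the paper's two-line argument: with the normalization $E_t(g)=\omega((0,t))$, condition \eqref{lem eq} together with \eqref{pra2} applied to $g(1\pm h)$ gives at once
$$
\int_0^t g^*(s)(1\pm h_\omega(s))\,ds=\int_{E_t(g)}g(s)(1\pm h(s))\,ds=\int_0^t (g(1\pm h))^*(s)\,ds,
$$
and \eqref{lem eq-dop} follows by differentiation; no auxiliary map is needed.

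Your substitute for this step has a genuine gap: the map $\sigma$, introduced as ``the a.e.-inverse of $\tau(s)=\inf\{t:\,s\in E_t\}$'', need not exist. A measure-preserving map whose level sets are null can still fail to be invertible mod null, and this failure occurs precisely in the flat-interval case you call the main obstacle. Concretely, take $g\equiv 1$, $h\equiv 1/2$ on $[0,1]$ and $E_t=[0,t/2]\cup[1/2,(1+t)/2]$: this nested family is simultaneously valid for $g$ and $g(1\pm h)$ (all three functions are constant), so it is a legitimate witness of (i), yet $\tau$ is the two-to-one doubling map $s\mapsto 2s \bmod 1$ and no injective measure-preserving $\sigma$ with $\sigma([0,t])=E_t$ (mod null) exists; consequently the identity $\int_{E_t}gh\,ds=\int_0^t g^*(u)h(\sigma(u))\,du$ cannot even be written. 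Likewise, your claim that on strictly decreasing parts ``$\sigma$ and $\omega$ are forced to agree'' is true but not automatic: it requires the hypothesis that an injective measure-preserving $\omega$ with $g^*=g(\omega)$ exists (which forces $g$ to be essentially injective off its positive-measure level sets), plus a disjointness/measure-counting argument; as stated it is an assertion, not a proof. The gap is repairable by your own flat-interval analysis --- once $h$ is shown to be a.e.\ constant on each positive-measure level set of $g$, one can dispense with $\sigma$ there and recover $\bigl(\int_{E_t}gh\,ds\bigr)_t'=g^*h_\omega$ directly --- and, to be fair, your longer route does buy something the paper glosses over: it treats a witnessing family in (i) that need not be $\omega((0,t))$, whereas the paper silently reduces (i) to that normalized choice. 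But as written, the construction on which your (i)$\Rightarrow$(ii) rests is false in general.
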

\begin{proof}
First, since the mapping $\omega^{-1}:\,[0,T]\to [0,T]$ also preserves the measure and $g(\omega)=g^*$, 
$$
\int_{\omega((0,t))}g(s)\,ds=\int_0^t g(\omega(u))\,du=\int_0^tg^*(u)\,du,\;\;t\in[0,T].$$
Therefore, we can assume that $E_t(g)=\omega((0,t))$, $0<t\le T$, and hence,
\begin{eqnarray}
\int_{E_t(g)}g(s)\cdot(1\pm h(s))\,ds&=&\int_{0}^t g(\omega(s))(1\pm h(\omega(s)))\,ds\nonumber\\
&=&\int_{0}^t g^*(s)\cdot(1\pm h_\omega(s))\,ds.
\label{help h1}
\end{eqnarray}
Since
$$
\int_{E_t(g)} g(s)\,ds=\int_0^t g^*(s)\,ds,$$
then from \eqref{help h1} it follows that
$$
\int_{E_t(g)} g(s)h(s)\,ds=\int_0^t g^*(s)h_\omega(s)\,ds.
$$
Differentiating this relation, we obtain \eqref{lem eq2}.

We now proceed with the proof of the equivalence of conditions (i), (ii) and (iii).

$(i)\Longrightarrow (ii)$.
Applying \eqref{lem eq}, we obtain
$$
\int_{E_t(g)}g(s)\cdot(1\pm h(s))\,ds=\int_{E_t(g(1\pm h))}g(s)\cdot(1\pm h(s))\,ds=\int_{0}^t (g(1\pm h))^*(s)\,ds,$$
whence, by \eqref{help h1},
$$
\int_{0}^t g^*(s)\cdot(1\pm h_\omega(s))\,ds=\int_{0}^t (g(1\pm h))^*(s)\,ds.$$
Differentiating this equality, we obtain \eqref{lem eq-dop}.

Implication $(ii)\Longrightarrow (iii)$ is obvious.

$(iii)\Longrightarrow (i)$.
Since the mapping $\omega:\,[0,T]\to [0,T]$ preserves the measure, the functions $g\cdot(1\pm h)$ and $g(\omega)\cdot(1\pm h(\omega))$ are equimeasurable. Therefore, from the equality
$$
g(\omega(t))\cdot(1\pm h(\omega(t)))=g^*(t)\cdot(1\pm h_\omega(t)),\;\;t\in [0,T],$$
and the assumption it follows that 
$$
(g(1\pm h))^*(t)=g^*(t)\cdot(1\pm h_\omega(t))$$
for almost all $t\in [0,T]$.
Thus, 
\begin{eqnarray}
\label{lem eq2A}
(g(1+h))^*(t)+(g(1-h))^*(t)&=&
2g^*(t)\nonumber\\&=&(g(1+h)+g(1-h))^*(t),\;\;t\in [0,T],
\label{lem eq2A}
\end{eqnarray}
and, applying \cite[Chapter~II, Property $9^0$, p. 91]{krein-petunin-semenov}, we arrive at the relation
$$
E_t(g\cdot(1+ h))=E_t(g\cdot(1- h)),\;\;t\in[0,T].$$
Consequently,
$$
\int_{E_t(g\cdot(1+ h))}g(s)(1+ h(s))\,ds=\int_0^t(g(1+ h))^*(s)\,ds$$
and
$$
\int_{E_t(g\cdot(1+ h))}g(s)(1- h(s))\,ds=\int_0^t(g(1- h))^*(s)\,ds.$$
Summing these equalities, due to \eqref{lem eq2A}, we obtain
$$
2\int_{E_t(g\cdot(1+ h))}g(s)\,ds=\int_0^t((g(1+h))^*(s)+(g(1-h))^*(s))\,ds=2\int_0^t g^*(s)\,ds.$$
Thus, we can take for $E_t(g)$, $0<t\le T$, the sets $E_t(g\cdot(1+ h))$. As a result, \eqref{lem eq} is proved.
\end{proof}

Let $a\in\mathbb{D}$. It is well known that the Blaschke factor $I_a(z)$, $z\in\D$, defined by formula \eqref{factor}, is a linear-fractional isomorphism of the closed unit disk $\bar{\D}$ and also it is one-to-one, continuous mapping of   the unit circle $\T$ onto itself. Therefore, the function $t\mapsto \tilde{I_a}(t)$ maps continuously and one-to-one the interval $[0,2\pi)$ onto $\T$. In what follows, we will assume that the values of the argument are chosen in such a way that the function $t\mapsto {\rm{arg}} \tilde{I_a}(t)$ is continuous on $\mathbb{R}$. In particular, ${\rm{arg}} \tilde{I_a}(2\pi)={\rm{arg}} \tilde{I_a}(0)+2\pi$ or ${\rm{arg}} \tilde{I_a}(2\pi)={\rm{arg}} \tilde{I_a}(0)-2\pi$.


Further, we will repeatedly use the following technical lemma.

\begin{lemma}
\label{lemma: calcul}
Let $a\in\mathbb{D}$. There exist positive constants $c_a$ and $C_a$ such that
$$
|\sin(({\rm{arg}} \tilde{I_a}(v)-{\rm{arg}} \tilde{I_a}(u))/2)|\le C_a|u-v|\;\;\mbox{for any}\;u,v\in \mathbb{R}$$
and
$$
|\sin(({\rm{arg}} \tilde{I_a}(v)-{\rm{arg}} \tilde{I_a}(u))/2)|\ge c_a|u-v|\;\;\mbox{if}\;|u-v|\le\pi.$$
\end{lemma}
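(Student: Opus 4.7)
The plan is to reduce both inequalities to the bi-Lipschitz property of the M\"obius transformation $I_a$ on the unit circle $\T$. Setting $\theta(t):={\rm arg}\,\tilde I_a(t)$, one has $\tilde I_a(t)=e^{i\theta(t)}$ because $|\tilde I_a(t)|=1$, and the elementary identity $|e^{i\alpha}-e^{i\beta}|=2|\sin((\alpha-\beta)/2)|$ applied twice yields
$$
|\tilde I_a(v)-\tilde I_a(u)|=2\bigl|\sin\bigl((\theta(v)-\theta(u))/2\bigr)\bigr|,\qquad |e^{iv}-e^{iu}|=2\bigl|\sin((v-u)/2)\bigr|.
$$
Thus the lemma is equivalent to the chord estimates $|\tilde I_a(v)-\tilde I_a(u)|\le 2C_a|u-v|$ for all $u,v\in\R$ and $|\tilde I_a(v)-\tilde I_a(u)|\ge 2c_a|u-v|$ for $|u-v|\le\pi$.

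For the upper bound, since $|a|<1$ the M\"obius transformation $I_a$ is holomorphic on a neighbourhood of $\overline{\D}$, and therefore Lipschitz on $\overline{\D}$ with some constant $M_a$. A direct computation gives $|I_a'(z)|=(1-|a|^2)/|1-\bar az|^2\le (1+|a|)/(1-|a|)$ on $|z|=1$, so one may take $M_a:=(1+|a|)/(1-|a|)$. Combined with $|e^{iv}-e^{iu}|\le|u-v|$, this yields $|\tilde I_a(v)-\tilde I_a(u)|\le M_a|u-v|$, whence $C_a:=M_a/2$ works.

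For the lower bound, I would use that $I_a^{-1}$ is itself a M\"obius automorphism of $\D$, again holomorphic on a neighbourhood of $\overline{\D}$ and hence Lipschitz on $\T$ with some constant $L_a$. Applied to the pair $(I_a(e^{iu}),I_a(e^{iv}))$, this gives $|e^{iv}-e^{iu}|\le L_a|\tilde I_a(v)-\tilde I_a(u)|$. The assumption $|u-v|\le\pi$ enters only at the final step, where the elementary inequality $|\sin(x/2)|\ge |x|/\pi$ valid for $|x|\le\pi$ converts $|e^{iv}-e^{iu}|=2|\sin((v-u)/2)|$ into $|e^{iv}-e^{iu}|\ge 2|u-v|/\pi$; this produces the required bound with $c_a:=1/(\pi L_a)$. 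The proof presents no genuine obstacle; the only thing that requires a little care is to keep the global bi-Lipschitz property of $I_a$ on $\T$ distinct from the purely local sine estimate---the sole place where the restriction $|u-v|\le\pi$ is actually used.
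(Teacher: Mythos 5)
Your proof is correct, and its skeleton is the same as the paper's: both arguments reduce the two inequalities, via the identity $|e^{i\alpha}-e^{i\beta}|=2|\sin((\alpha-\beta)/2)|$, to two-sided chord estimates $c_a'|e^{iv}-e^{iu}|\le|\tilde{I_a}(v)-\tilde{I_a}(u)|\le C_a'|e^{iv}-e^{iu}|$, and then finish with the elementary sine bounds, the restriction $|u-v|\le\pi$ entering only through $|\sin(x/2)|\ge|x|/\pi$. The one genuine point of divergence is how the chord estimates are obtained. The paper computes the exact M\"obius displacement identity
$$
|\tilde{I_a}(v)-\tilde{I_a}(u)|=\frac{(1-|a|^2)\,|e^{iv}-e^{iu}|}{|1-\bar a e^{iv}|\,|1-\bar a e^{iu}|},
$$
which yields both bounds simultaneously from $1-|a|\le|1-\bar a e^{it}|\le 1+|a|$, giving $c_a'=(1-|a|)/(1+|a|)$ and $C_a'=(1+|a|)/(1-|a|)$. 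You instead obtain the upper bound from $\sup_{\overline{\D}}|I_a'|\le(1+|a|)/(1-|a|)$ and the lower bound from the Lipschitz property of the inverse automorphism $I_a^{-1}$, which is likewise holomorphic on a neighbourhood of $\overline{\D}$; the constants come out the same. Your route is softer and has the advantage of working verbatim for any map holomorphic and injective on a neighbourhood of $\overline{\D}$, whereas the paper's identity is a self-contained one-line computation that never needs $I_a^{-1}$. One small imprecision (not a gap) in your write-up: the Lipschitz estimates for $I_a$ and $I_a^{-1}$ should be stated on the convex set $\overline{\D}$, so that the chord joining two points of $\T$ stays in the region where the derivative is bounded, rather than ``on $\T$'' as such.
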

\begin{proof}
For all $z_1,z_2\in\mathbb{C}$ such that $|z_1|=|z_2|=1$ it holds
\begin{equation} \label{gen elem}
|\sin(({\rm{arg}} z_1-{\rm{arg}}z_2)/2)|=\frac12 |z_1- z_2|.
\end{equation}
Indeed\footnote{Equality \eqref{gen elem} easily follows also from elementary geometric considerations.}, for arbitrary $\theta,\eta\in \mathbb{R}$
\begin{eqnarray*}
|e^{i\eta}-e^{i\theta}| &=&|\cos \eta-\cos \theta+i (\sin \eta-\sin \theta)|\\&=&
\sqrt{2}(1-(\cos \eta\cos \theta+\sin \eta\sin \theta))^{1/2}\\&=&
\sqrt{2}(1-\cos (\eta-\theta))^{1/2}=2|\sin((\eta-\theta)/2)|.
\end{eqnarray*}
Hence, setting $z_1=e^{i\eta}$, $z_2=e^{i\theta}$, we obtain \eqref{gen elem}.

In particular,
by \eqref{gen elem}
\begin{equation} \label{gen elem1}
|\sin(({\rm{arg}} \tilde{I_a}(v)-{\rm{arg}} \tilde{I_a}(u))/2)|=\frac12 |\tilde{I_a}(v)- \tilde{I_a}(u)|.
\end{equation}

On the other hand, for any $u,v\in \mathbb{R}$ we have
\begin{eqnarray*}
|\tilde{I_a}(v)- \tilde{I_a}(u)| &=&\left|\frac{e^{iv}-a}{1-\bar{a}e^{iv}}-\frac{e^{iu}-a}{1-\bar{a}e^{iu}}\right|\\&=&
\frac{(1-|a|^2)|e^{iv}-e^{iu}|}{|1-\bar{a}e^{iv}||1-\bar{a}e^{iu}|},
\end{eqnarray*}
whence, in view of the inequality $|a|<1$, it follows:
$$
c_a'|e^{iv}-e^{iu}|\le |\tilde{I_a}(v)- \tilde{I_a}(u)|\le C_a'|e^{iv}-e^{iu}|,$$
where the constants $c_a'$ and $C_a'$ depend only on $a$.
Since by \eqref{gen elem} we have
\begin{eqnarray*}
|e^{iv}-e^{iu}|=2|\sin((v-u)/2)|,
\end{eqnarray*}
the required estimates follow now from the last inequality, equality \eqref{gen elem1}, and elementary properties of the function $y=\sin t$.
\end{proof}

\vskip 0.4cm

\section{On some multiplier classes of analytic functions in the unit disk}
\label{S3}

The following notion seems to be quite natural in view of de Leeuw-Rudin theorem and Lemma \ref{l0} and in fact it had already appeared in an explicit form earlier (see e.g. \cite[Lemma~2.1]{dyak}).

\begin{definition}
Let $f\in H^1(\mathbb{D})$. By $L_f$ we denote the set of all measurable on $[0,2\pi]$ functions $h$ with values in $[-1,1]$ such that $h\not\equiv const$ and for some function $g\in H^1(\mathbb{D})$ the following equality holds:
\begin{equation}
\label{pr2muz}
\tilde{g}(t)=\tilde{f}(t)h(t)\;\mbox{for a.e.}\;t\in [0,2\pi),
\end{equation}
where, as above, $\tilde{f}$ and $\tilde{g}$ are the nontangential boundary values of the functions $f$ and $g$ as $z\to e^{it}$, respectively.
\end{definition}

\begin{remark}
\label{rem2a}
If $f\in H(X)$ for some symmetric space $X$ and $h\in L_f$, then the corresponding function $g$ (see \eqref{pr2muz}) also belongs to the space $H(X)$ and $\|g\|_{H(X)}\le \|f\|_{H(X)}$.
\end{remark}

The next result indicates that answer to the question of whether a function $f$ is an extreme point of the ${\rm ball}\,(H(X))$ depends largely on what functions belong to the set $L_f$.

\begin{prop}
\label{motivation of def}
Suppose that the norm of a symmetric space $X$ on $[0,2\pi]$ is strictly monotone, $\|f\|_{H(X)}=1$.

A function $f$ is not an extreme point of the ${\rm ball}\,(H(X))$ if and only if there exists $h\in L_f$ such that for the corresponding function $g\in H^1(\mathbb{D})$ satisfying \eqref{pr2muz}, the inequalities $\|f\pm g\|_{H(X)}\le 1$ hold.


\end{prop}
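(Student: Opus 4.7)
The plan is to prove the two implications separately, the key tool being Lemma~\ref{l0}, which serves here exactly as it did in the original proof of Theorem A.

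For the ``if'' direction, suppose that $h \in L_f$ and $g \in H^1(\D)$ satisfy $\tilde g = \tilde f \cdot h$ a.e.\ and $\|f \pm g\|_{H(X)} \le 1$. Since $\|f\|_{H(X)} = 1$, $f$ is a nonzero $H^1$-function, so $\tilde f(t) \neq 0$ for almost every $t \in [0,2\pi)$ (a classical consequence of the inner--outer factorization and Jensen's formula). The hypothesis $h \not\equiv \mathrm{const}$ excludes $h \equiv 0$, whence $\tilde g$ is nonzero on a set of positive measure and $g \not\equiv 0$. By Remark~\ref{rem2a}, $g \in H(X)$, and the identity $f = \tfrac12\bigl((f+g)+(f-g)\bigr)$ exhibits $f$ as the midpoint of a non-degenerate segment in $\mathrm{ball}(H(X))$, so $f$ is not an extreme point.

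For the ``only if'' direction, assume $f$ is not extreme; then one can choose $g \in H(X)$, $g \not\equiv 0$, with $\|f \pm g\|_{H(X)} \le 1$. Passing through the isometry \eqref{map}, the boundary values satisfy $\tilde f, \tilde g \in X$, $\|\tilde f\|_X = 1$, $\|\tilde f \pm \tilde g\|_X \le 1$, together with $m\{t : \tilde f(t) = 0\} = 0$. Since $X$ has strictly monotone norm, Lemma~\ref{l0} applies and furnishes a real-valued function $h$ with $|h| \le 1$ a.e.\ such that $\tilde g = h\tilde f$. To conclude that $h \in L_f$ it remains to rule out $h \equiv c$ for some real constant $c$: in that case $g = cf$ (by injectivity of \eqref{map}), and the bounds $\|f \pm g\|_{H(X)} = |1 \pm c| \le 1$ force $c = 0$, contradicting $g \not\equiv 0$. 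Hence $h$ is non-constant, $h \in L_f$, and the $g$ above is the $H^1$-companion of $h$ guaranteed by the definition of $L_f$.

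The real content is carried by the ``only if'' direction: one must argue that any perturbation $g$ witnessing non-extremality has the boundary form $\tilde g = h\tilde f$ with a real-valued multiplier satisfying $|h| \le 1$, and this is precisely what strict monotonicity of $\|\cdot\|_X$ buys via Lemma~\ref{l0}. Once that is in hand, the remaining verifications---the a.e.\ nonvanishing of $\tilde f$, the isometric transfer between $H(X)$ and its boundary image, and the elimination of the constant case for $h$---are routine and should present no genuine obstacle.
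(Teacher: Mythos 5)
Your proposal is correct and follows essentially the same route as the paper: both directions hinge on Lemma \ref{l0} together with the a.e.\ nonvanishing of $\tilde f$ (which you justify via inner--outer factorization and Jensen's formula, where the paper invokes the Luzin--Privalov uniqueness theorem), and the constant case $h\equiv c$ is eliminated by the identical computation $|1\pm c|\le 1 \Rightarrow c=0$. No gaps.
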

\begin{proof}
Suppose that $f$ is not an extreme point of ${\rm ball}\,(H(X))$. Then $\|f\pm g\|_{H(X)}\le 1$ for some function $g\in {\rm ball}\,(H(X))$, $g\ne 0$. Since $\|\tilde{f}\|_{X}=\|f\|_{H(X)}=1$, then by the Luzin-Privalov uniqueness theorem (see, for example,
\cite[III.D.$3^0$]{koosis}) $m\{t:\,\tilde{f}(t)=0\}=0$. Moreover, by the condition
$$
\|\tilde{f}\pm \tilde{g}\|_{X}=\|f\pm g\|_{H(X)}\le 1.$$
Therefore, by Lemma \ref{l0}, we have \eqref{pr2muz},
where $h:\,[0,2\pi]\to [-1,1]$. If we assume that $h\equiv c$ for some $c\in [-1,1]$, then we obtain
$$
1\ge\|\tilde{f}\pm \tilde{g}\|_{X}=\|\tilde{f}(1\pm c)\|_{X}=1\pm c,$$
whence $c=0$ and, therefore, $g=0$. Since this contradicts the assumption, $h\not\equiv const$ and hence $h\in L_f$. This completes the proof of this implication, because the function $g\in H^1(\mathbb{D})$ satisfies \eqref{pr2muz} and  inequalities $\|f\pm g\|_{H(X)}\le 1$ hold.

Conversely, let $h\in L_f$ satisfy all the assumptions from the formulation of the proposition. If $g\in H^1(\mathbb{D})$ is defined in \eqref{pr2muz}, then the condition $h\not\equiv const$ guarantees that $g\not\equiv 0$. Thus, $f$ is not an extreme point of the ${\rm ball}\,(H(X))$, and the proposition is proved.

\end{proof}

An inspection of the proof of de Leeuw-Rudin theorem (see \cite[Theorem 1]{deleeuw-rudin} or \cite[Chapter~9]{hoffman}) shows that $L_f=\emptyset$ if and only a given function $f\in H^1$ is outer. Let us turn now to consideration of the problem of identification of functions from the set $L_f$.

\begin{prop}
\label{lemma the most general case}
Let $f\in H^1(\mathbb{D})$ and let 
\begin{equation}
\label{genl3repr}
f(z)=F(z)\cdot I(z),\;\;z\in\D,
\end{equation}
where $F$ is an analytic function and $I$ is a nonconstant inner function.

Then the function
\begin{equation}
\label{genl3}
h(t)=\alpha+\beta\cos({\rm{arg}} (\tilde{I}(t))-\theta),\;\;0\le t\le 2\pi,
\end{equation}
belongs to the set $L_f$ whenever $\alpha, \beta, \theta\in \mathbb{R}$ satisfy the conditions: $\beta\ne 0$ and $|\alpha|+|\beta|\le 1$.
\end{prop}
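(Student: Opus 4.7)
The plan is to verify the three defining requirements for $h \in L_f$: measurability with values in $[-1,1]$, non-constancy, and existence of $g \in H^1(\mathbb{D})$ with $\tilde g = \tilde f \cdot h$ a.e. Measurability and the bound $|h(t)| \le |\alpha|+|\beta| \le 1$ are immediate; using $|\tilde I(t)|=1$ a.e., one may write $\cos(\arg\tilde I(t)-\theta) = \mathrm{Re}\bigl(e^{-i\theta}\tilde I(t)\bigr)$, which is manifestly a measurable function of $t$. For non-constancy, I would argue that if $\cos(\arg\tilde I(t)-\theta)$ equaled a constant $c$ almost everywhere, then $\tilde I$ would take values in the finite set $\{e^{i(\theta\pm\arccos c)}\}$; the Luzin--Privalov uniqueness theorem prevents $\tilde I$ from assuming any single value on a set of positive measure unless $I$ is constant, contradicting the hypothesis.

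The main construction is to exhibit $g$. Exploiting $|\tilde I(t)|=1$ a.e., I would rewrite
$$
\cos(\arg\tilde I(t)-\theta) = \frac{1}{2}\bigl(e^{-i\theta}\tilde I(t)+e^{i\theta}\overline{\tilde I(t)}\bigr) = \frac{1}{2}\bigl(e^{-i\theta}\tilde I(t) + e^{i\theta}/\tilde I(t)\bigr).
$$
Multiplying by $\tilde f(t) = \tilde F(t)\tilde I(t)$ clears the denominator and yields
$$
\tilde f(t)\,h(t) = \alpha\,\tilde f(t) + \frac{\beta e^{-i\theta}}{2}\,\tilde F(t)\,\tilde I(t)^2 + \frac{\beta e^{i\theta}}{2}\,\tilde F(t).
$$
Each summand is the nontangential boundary value of an explicit analytic function on $\mathbb{D}$, which suggests the candidate
$$
g(z) := \alpha\, f(z) + \frac{\beta e^{-i\theta}}{2}\,F(z)\,I(z)^2 + \frac{\beta e^{i\theta}}{2}\,F(z),\qquad z\in\mathbb{D}.
$$

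The delicate point, and the main obstacle, is checking $g \in H^1(\mathbb{D})$; equivalently, since the middle term $F\cdot I^2 = f\cdot I$ lies in $H^1$ (as $|I|\le 1$ on $\mathbb{D}$), one must show $F \in H^1$. This is not automatic from the bare hypothesis that $F$ is analytic, but it follows from the canonical inner-outer factorization $f = I_f \cdot F_f$ of $f \in H^1$: the identity $F\cdot I = I_f \cdot F_f$ together with analyticity of $F$ forces the zeros (with multiplicity) and singular inner measure of $I$ to be dominated by those of $I_f$, so that $I_f = I\cdot J$ for some inner function $J$; then $F = J\cdot F_f$, whence $|F(z)| \le |F_f(z)|$ on $\mathbb{D}$ and $F \in H^1$ with $\|F\|_{H^1} \le \|f\|_{H^1}$. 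With $g \in H^1(\mathbb{D})$ constructed and $\tilde g = \tilde f\cdot h$ a.e.\ by design, all the conditions defining $L_f$ are satisfied, completing the proof.
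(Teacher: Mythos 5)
Your construction of $g$ is the same as the paper's: on the boundary, $\cos(\arg\tilde I(t)-\theta)=\tfrac12\bigl(e^{-i\theta}\tilde I(t)+e^{i\theta}/\tilde I(t)\bigr)$, and multiplying by $\tilde f=\tilde F\tilde I$ clears the denominator, leading to the same candidate $g=\alpha f+\tfrac{\beta}{2}e^{-i\theta}FI^{2}+\tfrac{\beta}{2}e^{i\theta}F$ (the paper writes $g=(\alpha I+wI^{2}+\bar w)F$ with $w=\tfrac{\beta}{2}e^{-i\theta}$). Your non-constancy argument via the Luzin--Privalov theorem is in fact more careful than the paper's, which merely asserts that point. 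You have also correctly isolated the one delicate membership question --- whether $F\in H^{1}$, i.e.\ whether the last term of $g$ lies in $H^{1}$ --- which the paper's proof passes over in silence (it simply declares $g\in H^1$).

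However, your argument for $F\in H^{1}$ contains a genuine gap. Analyticity of $F=f/I$ does force the Blaschke factor of $I$ to divide the Blaschke factor of the inner part $I_f$ of $f$ (zeros must cancel), but it gives no control whatsoever over singular inner factors: a singular inner function $S$ has no zeros in $\mathbb{D}$, so $F_f/S$ is analytic for every outer function $F_f$, although $S$ need not divide $I_f$. Thus the step ``the singular inner measure of $I$ is dominated by that of $I_f$, hence $I_f=I\cdot J$'' fails, and with it the conclusion $F=J\cdot F_f\in H^1$. Moreover, no argument can close this gap, because the proposition as literally stated is false: take $f\equiv 1$, $I(z)=\exp\bigl((z+1)/(z-1)\bigr)$ and $F=1/I$, which is analytic in $\mathbb{D}$. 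Then $L_f=\emptyset$ --- if $g\in H^1$ has real boundary values, then for $k\ge 1$ both $c_{-k}(\tilde g)=0$ and $c_{k}(\tilde g)=\overline{c_{-k}(\tilde g)}=0$, so $g$ and hence $h$ would be constant --- yet the proposition would place $h=\alpha+\beta\cos(\arg\tilde I(t)-\theta)$, which is nonconstant by your own Luzin--Privalov argument, in $L_f$. What is needed, and what is implicitly assumed in the paper's own proof as well, is the extra hypothesis $F\in H^{1}$ (equivalently, that $I$ divides the inner factor of $f$); under it every term of $g=\alpha f+\tfrac{\beta}{2}e^{-i\theta}fI+\tfrac{\beta}{2}e^{i\theta}F$ is manifestly in $H^{1}$ and your proof closes. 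That hypothesis does hold where the paper applies the proposition, since there $I$ is a Blaschke product over zeros of $f$ and $f/B_J\in H^{1}$ by F.~Riesz's theorem.
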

\begin{proof}
First, it follows from the definition and conditions of the proposition that $h$ is a measurable function on $[0,2\pi]$ that takes values in the interval $[-1,1]$ and $h\not\equiv const$. Furthermore, as is easy to see, for some $\beta',\beta''\in\mathbb{R}$ we have
$$
h(t)=\alpha+2\beta'\cos({\rm{arg}} (\tilde{I}(t)))-2\beta''\sin({\rm{arg}} (\tilde{I}(t))).$$
Since $|\tilde{I}(t)|=1$ for almost all $t\in[0,2\pi)$, a direct calculation shows that
\begin{equation}
\label{useful}
h(t)=\alpha+w\tilde{I}(t)+ \frac{\bar{w}}{\tilde{I}(t)},
\end{equation}
where $w=\beta'+i \beta''$.

Let now $\tilde{g}(t):=h(t)\tilde{f}(t)$, $0\le t\le 2\pi$. 
Due to \eqref{genl3repr}  and \eqref{useful}, we have
\begin{eqnarray*}
\tilde{g}(t)&=& \Big(\alpha+w\tilde{I}(t)+ \frac{\bar{w}}{\tilde{I}(t)}\Big)\cdot \tilde{F}(t)\tilde{I}(t)\\ &=& \big(\alpha\tilde{I}(t)+w\tilde{I}(t)^2+\bar{w}\big)\cdot \tilde{f}(t).
\end{eqnarray*}
Since $\tilde{g}(t)=g(e^{it})$, $0\le t< 2\pi$, where the function
$$
g(z):=\big(\alpha{I}(z)+w{I}(z)^2+\bar{w}\big)\cdot {F}(z),$$
belongs to the space $H^1(\mathbb{D})$, then $h\in L_f$. This proves the proposition.

\end{proof}

Suppose now that the function $f\in H^1(\mathbb{D})$ has zeros. Namely, let $f(a_i)=0$, $i\in J$, where $\{a_i\}_{i\in J}$ is a (finite or infinite) sequence of points in $\mathbb{D}$ satisfying the condition 
\begin{equation}
\label{converg}
\sum_{i\in J} (1-|a_i|)<\infty
\end{equation}
(each factor $(1-|a_i|)$ is repeated in the product as many times as multiplicity of the root $a_i$). Then (see Subsect.\,\ref{Hardy-type spaces}), the corresponding Blaschke product $B_J:=\prod_{i\in J} I_{a_i}$, where for every $a\in \D$ the function $I_a$ is defined by formula  \eqref{factor}, is an inner function such that \eqref{genl3repr}
holds for $I=B_J$ (see e.g. \cite[Chapter IV, $B.2^0$]{koosis}). Thus, by Proposition \ref{lemma the most general case}, for every $\alpha, \beta, \theta\in \mathbb{R}$ such that $\beta\ne 0$ and $|\alpha|+|\beta|\le 1$, the function
\begin{equation*}
\label{genl3}
h(t)=\alpha+\beta\cos({\rm{arg}} (\tilde{B_J}(t))-\theta),\;\;0\le t\le 2\pi,
\end{equation*}
belongs to the set $L_f$.

\begin{remark}
According to equality \eqref{useful}, $h$ is the limit boundary value of the  meromorphic function $H$ in the disk $\mathbb{D}$ defined by
$$
H(z)=\alpha+w{B}_{J}(z)+ \frac{\bar{w}}{{B}_{J}(z)},\;\;z\in\mathbb{D}.$$
\end{remark}

Next, we  single out a special class of functions $f$, for which the set $L_f$ consists only of functions of the form \eqref{genl3repr}. 

Assume that $I$ is a Blaschke factor $I_a$. As above, if $f(a)=0$ for some $a\in\mathbb{D}$, then the same conditions imposed on $\alpha, \beta, \theta\in \mathbb{R}$ ensure that the function
\begin{equation}
\label{partial3}
h(t)=\alpha+\beta\cos({\rm{arg}} (\tilde{I_a}(t))-\theta),\;\;t\in[0,2\pi),
\end{equation}
belongs to the set $L_f$.

\begin{prop}
\label{lemma partial case}
If a function $f\in H^1(\mathbb{D})$ is defined by the formula
\begin{equation}
\label{partial}
f(z)=F(z)I_a(z),
\end{equation}
where $a\in \mathbb{D}$ and $F$ is an outer analytic function, then $h\in L_f$ if and only if $h$ is of the form \eqref{partial3} with $\alpha,\beta,\theta\in\mathbb{R}$ such that $\beta\ne 0$ and $|\alpha|+|\beta|\le 1$.
\end{prop}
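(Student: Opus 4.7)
The ``if'' direction follows immediately from Proposition \ref{lemma the most general case} applied to the nonconstant inner function $I = I_a$, so the real work is in the converse. I would start from an arbitrary $h \in L_f$ and show that it must have the prescribed form \eqref{partial3}.

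First I would reduce to a bounded analytic problem. Let $g \in H^1(\D)$ satisfy $\tilde{g} = h\tilde{f}$ a.e., with $|h| \le 1$. Since $|\tilde{I_a}| = 1$, this gives $|\tilde{g}| \le |\tilde{F}|$ a.e., and the extremality property of the outer function $F$ recalled in Subsect.\,\ref{Hardy-type spaces} upgrades this to $|g(z)| \le |F(z)|$ throughout $\D$. Because an outer function is zero-free in $\D$, the quotient $G := g/F$ is analytic in $\D$ with $\|G\|_\infty \le 1$; on the boundary, $\tilde{G} = h\,\tilde{I_a}$.

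The crux of my approach would be to exhibit an auxiliary function in $H^\infty$ with real boundary values, then invoke the classical fact that such a function must be a real constant. The candidate I propose is
\begin{equation*}
K(z) := \frac{G(z) - G(a)}{I_a(z)} - \overline{G(a)}\,I_a(z).
\end{equation*}
Since $G - G(a) \in H^\infty$ and vanishes at $a$, division by the Blaschke factor $I_a$ leaves us in $H^\infty$ (standard inner-outer factorization), so $K \in H^\infty$. Using $\overline{\tilde{I_a}(t)} = 1/\tilde{I_a}(t)$ on $\T$, a direct computation gives the boundary values
\begin{equation*}
\tilde{K}(t) = h(t) - G(a)\,\overline{\tilde{I_a}(t)} - \overline{G(a)}\,\tilde{I_a}(t) = h(t) - 2\,\Re\bigl(\overline{G(a)}\,\tilde{I_a}(t)\bigr),
\end{equation*}
which are real because $h$ is. A bounded analytic function on $\D$ with real boundary values is a real constant --- its imaginary part is a bounded harmonic function with zero nontangential limits a.e., hence identically zero by the Poisson representation. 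Calling this constant $\alpha \in \R$ and clearing denominators produces $G(z) = G(a) + \alpha\, I_a(z) + \overline{G(a)}\,I_a(z)^2$.

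The rest is bookkeeping. Passing to the boundary gives $h = \alpha + 2\,\Re(\overline{G(a)}\,\tilde{I_a})$; writing $\overline{G(a)} = (\beta/2)e^{-i\theta}$ with $\beta \ge 0$ produces the form \eqref{partial3}. The hypothesis $h \not\equiv const$ forces $\beta \ne 0$, and since $\arg \tilde{I_a}(t)$ sweeps an interval of length $2\pi$ as $t$ runs over $[0,2\pi)$, $\cos(\arg \tilde{I_a}(t) - \theta)$ attains every value in $[-1,1]$, so the pointwise bound $|h| \le 1$ translates into $|\alpha| + |\beta| \le 1$. I expect the only slightly delicate point to be confirming that $(G - G(a))/I_a \in H^\infty$, but this is routine from the multiplicative structure of inner and outer factors in $H^p$.
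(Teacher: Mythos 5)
Your proof is correct, and its second half takes a genuinely different route from the paper's. Both arguments begin the same way: the extremality property of the outer factor $F$ yields $G:=g/F\in H^\infty(\D)$ with $\|G\|_\infty\le 1$ and $\tilde G=h\tilde{I_a}$ (in the paper this function is called $u$). From there the paper conjugates by the M\"obius map: it sets $v:=u\circ I_a^{-1}$, expands $v(z)=\sum_{k\ge0}w_kz^k$, changes variables so that $h((\tilde{I_a})^{-1}(e^{it}))$ becomes the series ${w_0}e^{-it}+w_1+w_2e^{it}+\sum_{k\ge3}w_ke^{i(k-1)t}$, and uses the reality of $h$ to force $w_1\in\R$, $w_2=\overline{w_0}$ and $w_k=0$ for $k\ge3$. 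You instead construct the auxiliary function $K=(G-G(a))/I_a-\overline{G(a)}\,I_a\in H^\infty$ (division by the Blaschke factor is legitimate since $G-G(a)$ vanishes at $a$), observe that its boundary values $h-2\,\Re\,(\overline{G(a)}\tilde{I_a})$ are real a.e., and invoke the classical fact that a bounded analytic function with a.e.\ real boundary values is a real constant; this pins down $G=G(a)+\alpha I_a+\overline{G(a)}I_a^2$ in one stroke. The two routes are close cousins---the ``real boundary values imply constant'' lemma is exactly what the paper's Fourier computation establishes after conjugation---but yours trades the change-of-variables bookkeeping for the need to guess the right function $K$, and is correspondingly shorter; the paper's is more mechanical, in that the surviving Fourier modes reveal the form of $h$ without knowing the answer in advance. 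Your closing steps ($\beta\ne0$ because $h\not\equiv const$, and $|\alpha|+|\beta|\le1$ because $\arg\tilde{I_a}$ sweeps a full period so the cosine attains $\pm1$) are also correct, and are in fact spelled out in more detail than in the paper, which merely asserts the final inequality.
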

\begin{proof}
By the above discussion, we need only to prove that each function $h\in L_f$ can be representable in the form \eqref{partial3}.

Suppose $h\in L_f$. Since $|h(t)|\le 1$, $0\le t<2\pi$, from \eqref{pr2muz} it follows that
$$
|\tilde{g}(t)|\le|\tilde{f}(t)|=|\tilde{F}(t)|,\;\;0\le t<2\pi.$$
Therefore, since $F$ is an outer function, this inequality implies that the function $u(z):=g(z)/F(z)$ belongs to $H^\infty(\mathbb{D})$, $|u(z)|\le 1$. Thus,
\begin{equation}
\label{pr2mu1}
\tilde{g}(t)=\tilde{u}(t)\tilde{F}(t)=\frac{\tilde{u}(t)}{\tilde{I}_a(t)}\cdot \tilde{f}(t),\;\;0\le t<2\pi,
\end{equation}
whence, by \eqref{pr2muz}, it follows that
\begin{equation}
\label{pr2mu1dop}
h(t)=\frac{\tilde{u}(t)}{\tilde{I}_a(t)},\;\;0\le t<2\pi.
\end{equation}

Denote by $I_a^{-1}$ the linear-fractional mapping of the disk $\mathbb{D}$, inverse to $I_a$, and set $v:=u(I_a^{-1})$. Then $v$ is an analytic function in $\mathbb{D}$, $|v(z)|\le 1$, $z\in\mathbb{D}$, and
$$
\frac{{u}(z)}{{I}_a(z)}=\frac{{v}(I_a(z))}{{I}_a(z)}.$$
Hence, in view of \eqref{pr2mu1dop}, it holds
$$
h(t)=\frac{\tilde{u}(t)}{\tilde{I_a}(t)}=\frac{{v}(\tilde{I_a}(t))}{\tilde{I_a}(t)},\;\;0\le t<2\pi.$$
Therefore, from the Taylor expansion $v(z)=\sum_{k=0}^\infty w_kz^k$, it follows 
\begin{equation}
\label{back to}
h(t)=\frac{w_0}{\tilde{I_a}(t)}+w_1+w_2{\tilde{I_a}(t)}+\sum_{k=3}^\infty w_k{\tilde{I_a}(t)}^{k-1}.
\end{equation}

Since the mapping $\tilde{I_a}:\,[0,2\pi)\to \mathbb{T}$ is one-to-one, the inverse mapping $(\tilde{I_a})^{-1}:\,\mathbb{T}\to [0,2\pi)$ is well defined. Moreover, since $h$ is real-valued, so is the function $t\mapsto h((\tilde{I_a})^{-1}(e^{it}))$, $0\le t<2\pi$. Consequently, taking into account that
$$
h((\tilde{I_a})^{-1}(e^{it}))={w_0}e^{-it}+w_1+w_2e^{it}+\sum_{k=3}^\infty w_ke^{i(k-1)t},$$
we have for all $0\le t<2\pi$:
\begin{eqnarray*}
\Im\,(h((\tilde{I_a})^{-1}(e^{it})))&=&(\Im\,(w_0)+\Im\,(w_2))\cos t+(-\Re\,(w_0)+\Re\,(w_2))\sin t+\Im\,(w_1)\\&+&\sum_{k=3}^\infty \big((\Im\,(w_k)\cos((k-1)t)+(\Re\,(w_k)\sin((k-1)t)\big)=0.
\end{eqnarray*}
Hence, $w_1\in\mathbb{R}$, $w_k=0$ for all $k\ge 3$, $\Re (w_2)=\Re (w_0)$ and $\Im (w_2)=-\Im (w_0)$. Thus, because $h$ is real-valued, $\tilde{I}_a(t)=\exp(i\cdot{\rm{arg}} (\tilde{I_a}(t)))$ and equality \eqref{back to} holds, we obtain
$$
h(t)=\Re\,(h(t))=
w_1+2\Re\,(w_0) \cos({\rm{arg}} (\tilde{I_a}(t)))-2\Im\, (w_0)\sin({\rm{arg}} (\tilde{I_a}(t))).$$
Applying now elementary calculations, we arrive at \eqref{partial3} for some $\alpha, \beta,\theta\in\mathbb{R}$. Since the inequality $|\alpha|+|\beta|\le 1$ is a consequence of the relation $|h(t)|\le 1$, $0\le t<2\pi$, the proof is completed.
\end{proof}

\vskip 0.4cm

\section{Characterizations of extreme points of the unit ball of $\hf$}
\label{S4}

From a technical point of view, it will be more convenient to characterize  normalized functions in the space $\hf$ that are not extreme points of the ${\rm ball}\,(\hf)$. In other words, we need to find out conditions under which, for a given function $f\in\hf$, $\|f\|_{\hf}=1$, there exists a non-null function $g\in H^1(\D)$ such that $\|f\pm g\|_{\hf}\le 1$. It is instructive to consider first the partial case when $\mu:=|\tilde{f}|$ is a decreasing function on $[0,2\pi]$.

Let $f\in\hf$, $\|f\|_{\hf}=1$. Assume that there exists a measurable on $[0,2\pi]$ function $h$ with values in $[-1,1]$, $h\not\equiv 0$, satisfying the following conditions:
\begin{equation}
\label{que3}
\int_0^{2\pi}\tilde{f}(t)h(t)e^{int}\,dt=0,\;\;n=1,2,\dots,
\end{equation}
\begin{equation}
\label{que1a}
\mu(t)(1+h(t))\;\;\mbox{and}\;\;\mu(t)(1-h(t))\;\;\mbox{decrease on}\;\;[0,2\pi),
\end{equation}
and
\begin{equation}
\label{que2a}
\int_0^{2\pi}\mu(t)h(t)\,d\varphi(t)=0.
\end{equation}
We show that then $f$ is not an extreme point of the unit ball of $\hf$.

Indeed, condition \eqref{que3} means that the function $\tilde{g}:=h\tilde{f}$ is the nontangential limit of some non-null function $g\in\hf$. Furthermore, since $\mu$ decreases and 
$$
\int_0^{2\pi}\mu(t)\,d\varphi(t)=\|f\|_{\hf}=1,$$
conditions \eqref{que1a} and \eqref{que2a} imply that
\begin{eqnarray*}
\|f\pm g\|_{H(\Lambda(\varphi))} &=&\|\tilde{f}\pm \tilde{g}\|_{\Lambda(\varphi)} =\|\mu(1\pm h)\|_{\Lambda(\varphi)}\\& =&\int_0^{2\pi}\big(\mu(1\pm h)\big)^*(t)\,d\varphi(t)\\& =&\int_0^{2\pi}\mu(t)\,d\varphi(t)\pm \int_{0}^{2\pi}\mu (t)h(t)\,d\varphi(t)\\& =& 1\pm \int_{0}^{2\pi}\mu (t)h(t)\,d\varphi(t)=1.
\end{eqnarray*}
As a result, we get that $f$ is not an extreme point of the ${\rm ball}\,(\hf)$.

Now, we remove the monotonicity assumption imposed on $\mu$ and prove the following criterion.

\begin{theorem}
\label{propos2}
Let $\varphi$ be a strictly increasing and strictly concave function on $[0,2\pi]$. 
Suppose $\|f\|_{\hf}=1$ and $\mu:=|\tilde{f}|$. The following conditions are equivalent:

(a) $f$ is not an extreme point of the unit ball of $\hf$;

(b) there exists a measurable on $[0,2\pi]$ function $h$ with values in $[-1,1]$, $h\not\equiv 0$, satisfying condition \eqref{que3} and such that
\begin{equation}
\label{que1}
E_t(\mu)=E_t(\mu(1+h))=E_t(\mu(1-h))\;\;\mbox{for all}\;\;t\in[0,2\pi],
\end{equation}
and
\begin{equation}
\label{que2}
\int_0^{2\pi}\Big(\int_{E_t(\mu)}\mu(s)h(s)\,ds\Big)_t'\,d\varphi(t)=0.
\end{equation}
\end{theorem}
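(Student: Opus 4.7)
The plan is to establish each implication by transferring the problem from the Hardy-Lorentz space $\hf$ to the underlying Lorentz space $\Lambda(\varphi)$ via the isometric identification $f\mapsto \tilde f$, and then exploiting the auxiliary results of Section~\ref{S2}.

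For the direction $(b)\Rightarrow(a)$, given an $h$ as in (b) I would set $\tilde g := h\tilde f$. Since $|h|\le 1$ and $\tilde f \in \Lambda(\varphi)$, $\tilde g$ belongs to $\Lambda(\varphi)\subset L^1$, and condition \eqref{que3} is precisely the statement that $c_k(\tilde g)=0$ for $k<0$, so $\tilde g$ is the boundary value of some $g\in H^1(\D)$; Remark~\ref{rem2a} then places $g$ in $\hf$. The function $g$ is nontrivial because $\|\tilde f\|_{\Lambda(\varphi)}=1$ and the Luzin--Privalov uniqueness theorem forces $\tilde f\ne 0$ a.e., so $h\not\equiv 0$ yields $\tilde g\not\equiv 0$. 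Using $|\tilde f \pm \tilde g|=\mu(1\pm h)$ (valid because $|h|\le 1$), together with \eqref{que1} and \eqref{pra2}, I obtain
\begin{equation*}
\int_0^t(\mu(1\pm h))^*(s)\,ds = \int_{E_t(\mu)}\mu(s)(1\pm h(s))\,ds = \int_0^t\mu^*(s)\,ds \pm \int_{E_t(\mu)}\mu(s)h(s)\,ds.
\end{equation*}
Differentiating in $t$, integrating against $d\varphi$, and invoking \eqref{que2} yields $\|f\pm g\|_{\hf}=1$, so $f$ is not extreme.

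For the direction $(a)\Rightarrow(b)$, I start from a nonzero $g\in\hf$ with $\|f\pm g\|_{\hf}\le 1$. Because $\varphi$ is strictly increasing, $\Lambda(\varphi)$ has strictly monotone norm, so Proposition~\ref{motivation of def} (via Lemma~\ref{l0}) supplies a real-valued measurable $h$ with $|h|\le 1$, $h\not\equiv\mathrm{const}$ (in particular $h\not\equiv 0$), such that $\tilde g=h\tilde f$; condition \eqref{que3} is then immediate from $g\in H^1(\D)$. For \eqref{que1} I apply Proposition~\ref{propos} to the pair $(\mu,\mu h)$, whose required hypotheses, namely $\mu\ge 0$, $\mu h$ real-valued, $\|\mu\|_{\Lambda(\varphi)}=1$, and $\|\mu\pm\mu h\|_{\Lambda(\varphi)}=\|f\pm g\|_{\hf}\le 1$, are all available. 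Running the same chain of equalities as above then produces
\begin{equation*}
\|f\pm g\|_{\hf} = 1 \pm \int_0^{2\pi}\Big(\int_{E_t(\mu)}\mu(s)h(s)\,ds\Big)_t'\,d\varphi(t),
\end{equation*}
and since both quantities on the left are $\le 1$, adding the two versions with opposite signs forces this integral to vanish, which is \eqref{que2}.

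The main obstacle I anticipate is the careful bookkeeping of the chain of identities linking $\int_0^t(\mu(1\pm h))^*$ to $\int_0^t\mu^* \pm \int_{E_t(\mu)}\mu h$: one must apply \eqref{pra2} to each of $\mu(1+h)$ and $\mu(1-h)$, exploit the coincidence of the sets $E_t(\mu(1\pm h))$ with $E_t(\mu)$ given by \eqref{que1}, and justify the passage from the integrated identity to its differentiated form (guaranteed by the a.e.\ differentiability noted right after \eqref{pra2}) before pairing the result with $d\varphi$. Once this is executed symmetrically for both signs, the remainder of the argument in both implications is routine.
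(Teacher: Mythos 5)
Your proof is correct and follows essentially the same route as the paper's: both directions rest on Proposition \ref{motivation of def} (hence Lemma \ref{l0}) to produce the multiplier $h$ with $\tilde g=h\tilde f$, on Proposition \ref{propos} applied to the pair $(\mu,\mu h)$ to get \eqref{que1}, and on the key identity $\|f\pm g\|_{\hf}=1\pm\int_0^{2\pi}\big(\int_{E_t(\mu)}\mu(s) h(s)\,ds\big)_t'\,d\varphi(t)$, from which \eqref{que2} is equivalent to $\|f\pm g\|_{\hf}=1$. The only difference is cosmetic: you obtain this identity by differentiating the integrated rearrangement identities and using linearity of the integral against $d\varphi$, whereas the paper derives the same relation via Stieltjes integration by parts with $d(-\varphi'(t))$; the two computations are equivalent given the a.e.\ differentiability of $t\mapsto\int_{E_t(\mu)}\mu(s)h(s)\,ds$ noted after \eqref{pra2}.
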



\begin{proof}
$(a)\Rightarrow (b)$.
First, by Proposition \ref{motivation of def}, there exists $h\in L_f$ such that for the  function $g\in H^1(\mathbb{D})$ satisfying \eqref{pr2muz}, we have
\begin{equation}
\label{one more eq}
\|f\pm g\|_{\hf}=1.
\end{equation}
Therefore, since $|\widetilde{(f\pm g)}(s)|=\mu(s)(1\pm h(s))$, $0<s\le 2\pi$, it holds
$$
\|\mu\pm \mu h)\|_{\Lambda(\varphi)}=\|{f}(1\pm h)\|_{\hf}=1,$$
and, by Proposition \ref{propos}, we obtain \eqref{que1}. Hence,
\begin{equation}
\label{pr1mu}
\int_{E_t}\mu(s)\,ds=\int_0^t \mu^*(s)\,ds,\;\;0<t\le 2\pi,
\end{equation}
and
\begin{equation}
\label{pr2mu}
\int_{E_t}\mu(s)(1\pm h(s))\,ds=\int_0^t (\mu(1\pm h))^*(s)\,ds,\;\;0<t\le 2\pi,
\end{equation}
where $E_t:=E_t(\mu)$, $0<t\le 2\pi$.

Let
$$
F_+(t):=\int_0^t (\mu(1+h))^*(s) ds,\;F_-(t):=\int_0^t (\mu(1-h))^*(s) ds, \quad t\in [0,2 \pi].
$$
Then, integrating by parts and applying \eqref{pr2mu} and \eqref{pr1mu},  we obtain
\begin{eqnarray*}
\|f+ g\|_{H(\Lambda(\varphi))} &=& \|\mu(1+ h)\|_{\Lambda(\varphi)}=\int_0^{2\pi}(\mu(1+ h))^*(t)\,d\varphi(t)\\&=&\int_0^{2\pi} F_+(t)\,d(- \varphi'(t))
+ \varphi'(2\pi) F_+(2\pi)\\&=&
\int_0^{2\pi} \int_{E_t}\mu(s)(1+h(s))\,ds\,d(- \varphi'(t))\\&+&
\varphi'(2\pi) \int_0^{2\pi}\mu(t)(1+h(t))\,dt\\&=&
\int_0^{2\pi} \int_{0}^t\mu^*(s)\,ds\,d(- \varphi'(t))+ \varphi'(2\pi) \int_0^{2\pi}\mu^*(t)\,dt\\&+&\int_0^{2\pi} \int_{E_t}\mu(s)h(s)\,ds\,d(- \varphi'(t))+\varphi'(2\pi) \int_0^{2\pi}\mu(t)h(t)\,dt.
\end{eqnarray*}
Next, because $E_{2\pi}=[0,2\pi]$ and $\|f\|_{H(\Lambda(\varphi))}=1$, by going in the opposite direction, we arrive at the equality:
\begin{eqnarray}
\|f+ g\|_{H(\Lambda(\varphi))} &=&\|f\|_{H(\Lambda(\varphi))}+\int_0^{2\pi}\Big(\int_{E_t}\mu(s)h(s)\,ds\Big)_t'\,d\varphi(t)\nonumber\\&=& 1+\int_0^{2\pi}\Big(\int_{E_t}\mu(s)h(s)\,ds\Big)_t'\,d\varphi(t).
\label{spec1}
\end{eqnarray}
In the same way, it can be shown also that
\begin{equation}
\label{spec1a}
\|f- g\|_{H(\Lambda(\varphi))} =1-\int_0^{2\pi}\Big(\int_{E_t}\mu(s)h(s)\,ds\Big)_t'\,d\varphi(t).
\end{equation}
As a result, from \eqref{spec1} and \eqref{spec1a} it follows the equivalence of  relations \eqref{que2} and \eqref{one more eq}.

Finally, since $g$ is an analytic function in the disk, by \eqref{pr2muz}, we arrive at \eqref{que3}:
$$
\int_0^{2\pi}\tilde{f}(t)h(t)e^{int}\,dt=\int_0^{2\pi}\tilde{g}(t)e^{int}\,dt=0,\;\;n=1,2,\dots.
$$

$(b)\Rightarrow (a)$.
By \eqref{que3} (see also the discussion before the theorem) the function $\tilde{g}:=h\tilde{f}$ is the nontangential limit of some non-null function $g\in\hf$. Further, condition \eqref{que1} implies relations \eqref{pr1mu} and \eqref{pr2mu}. Consequently, precisely as in the first part of the proof, we obtain equalities \eqref{spec1} and \eqref{spec1a}, which imply that relations \eqref{one more eq} and \eqref{que2} are equivalent. Summing up, we conclude that $f$ is not an extreme point of the unit ball of $\hf$.

\end{proof}

Coming back to the partial case, considered before Theorem \ref{propos2}, we obtain the following.

\begin{corollary}
\label{corollary from propos2}
Assume that all the conditions of Theorem \ref{propos2} are satisfied, and, additionally, let the function $\mu$ decrease on the interval $[0,2\pi)$.
Then, $f$ is not an extreme point of the ${\rm ball}\,(\hf)$ if and only if there exists a measurable function $h:\,[0,2\pi]\to [-1,1]$ satisfying conditions \eqref{que3}, \eqref{que1a}, and \eqref{que2a}.
\end{corollary}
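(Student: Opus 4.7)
The plan is to deduce the corollary directly from Theorem \ref{propos2} by showing that, under the additional hypothesis that $\mu$ decreases, the conditions \eqref{que1} and \eqref{que2} appearing in part (b) of that theorem reduce to the simpler pointwise conditions \eqref{que1a} and \eqref{que2a} stated in the corollary.

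First, I would observe that if $\mu$ is decreasing on $[0,2\pi)$, then $\mu^* = \mu$ a.e.\ and the family of level sets may be chosen as $E_t(\mu) = (0,t)$ for every $t \in (0,2\pi]$. In the notation of Proposition \ref{reduction4}, this amounts to taking the one-to-one measure-preserving map $\omega$ to be the identity of $[0,2\pi]$; consequently $h_\omega = h$. Since $\|f\|_{\hf} = 1$ forces $\mu > 0$ a.e.\ (by the Luzin--Privalov uniqueness theorem, as in the proof of Proposition \ref{motivation of def}), Proposition \ref{reduction4} applies with $g := \mu$. Its equivalence $(i) \Leftrightarrow (iii)$ then translates condition \eqref{que1} of Theorem \ref{propos2} into the assertion that both $\mu(1+h)$ and $\mu(1-h)$ decrease on $[0,2\pi)$, which is precisely \eqref{que1a}.

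Next, the auxiliary formula \eqref{lem eq2} from the same proposition reads, in the present setting,
\[
\Big(\int_{E_t(\mu)} \mu(s) h(s)\,ds\Big)'_t = \mu^*(t)\, h_\omega(t) = \mu(t)\, h(t)
\]
for almost every $t \in [0,2\pi]$. Plugging this identity into \eqref{que2} collapses the outer integral to $\int_0^{2\pi} \mu(t) h(t)\,d\varphi(t)$, so that \eqref{que2} becomes \eqref{que2a}. Combining the two reductions, condition (b) of Theorem \ref{propos2} becomes exactly the conjunction of \eqref{que3}, \eqref{que1a}, and \eqref{que2a}, and the equivalence asserted in the corollary is immediate from the theorem.

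I do not foresee any serious obstacle: once Proposition \ref{reduction4} is at hand, the whole argument is essentially a renaming of variables, and the informal computation given just before Theorem \ref{propos2} already treats the easier $(b)\Rightarrow(a)$ direction by hand. The only small technical point is to check the positivity $\mu > 0$ a.e.\ required for the application of Proposition \ref{reduction4}, and this follows from the Luzin--Privalov theorem applied to the nontangential boundary function $\tilde f$.
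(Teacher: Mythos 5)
Your proposal is correct and follows essentially the same route as the paper: both reduce the corollary to Theorem \ref{propos2} by showing that, when $\mu$ decreases (so that $\mu^*=\mu$ and one may take $E_t(\mu)=(0,t)$), condition \eqref{que1} is equivalent to \eqref{que1a} and condition \eqref{que2} is equivalent to \eqref{que2a}. The only difference is one of packaging: the paper verifies these two equivalences by a short direct computation, whereas you obtain them by specializing Proposition \ref{reduction4} (the equivalence $(i)\Leftrightarrow(iii)$ together with identity \eqref{lem eq2}) to the identity mapping $\omega$, which is legitimate since the identity is a one-to-one measure-preserving map with $\mu^*=\mu(\omega)$ and $\mu>0$ a.e.\ by the Luzin--Privalov theorem.
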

\begin{proof}
In view of Theorem \ref{propos2}, it suffices to check that conditions \eqref{que1} and \eqref{que2} are equivalent to conditions \eqref{que1a} and \eqref{que2a}, respectively.

First, let \eqref{que1} and \eqref{que2} be satisfied. 
Since $\mu$ decreases, we can assume that $\mu^*=\mu$ and hence $E_t=(0,t)$, $0\le t\le 2\pi$. Therefore, by \eqref{que1}, we have
\begin{equation*}
\int_{0}^t\mu(s)(1\pm h(s))\,ds=\int_0^t (\mu(1\pm h))^*(s)\,ds,\;\;0<t\le 2\pi,
\end{equation*}
whence
$$
(\mu(1\pm h))^*=\mu(1\pm h)\;\;\mbox{a.e. on}\;[0.2\pi).$$
Thus, \eqref{que1a} is proved. Also, since 
\begin{equation}
\label{triv}
\mu(t)h(t)=\Big(\int_{0}^t \mu(s)h(s)\,ds\Big)_t'=
\Big(\int_{E_t}\mu(s)h(s)\,ds\Big)_t'\;\;\mbox{a.e. on}\;[0.2\pi),
\end{equation}
\eqref{que2} implies \eqref{que2a}.

Conversely, because $\mu$ decreases, from \eqref{que1a} it follows that
$$
E_t(\mu)=E_t(\mu(1+h))=E_t(\mu(1-h))=(0,t),\;\;0\le t\le 2\pi,$$
which implies \eqref{que1}. Moreover, observe that equality \eqref{triv} still holds. Combining this together with \eqref{que2a}, we obtain \eqref{que2}.

\end{proof}

Given a function $f\in H(\Lambda(\varphi))$ we denote by $\mathcal{I}(f)$ the set of all nonconstant inner functions $I$, for which equality \eqref{genl3repr} holds for some analytic function $F$.

By using Propositions \ref{lemma the most general case} and \ref{lemma partial case}
from the previous section, the last results can be specified as follows.

\begin{theorem}\label{Th3a}
Let $\varphi$ be a strictly increasing and strictly concave function on $[0,2\pi]$. 
Assume also that $f\in H(\Lambda(\varphi))$, $\|f\|_{H(\Lambda(\varphi))}=1$. 

Consider two conditions:

(i) $f$ is not an extreme point of the unit ball in $H(\Lambda(\varphi))$;

(ii) there exist $I\in\mathcal{I}(f)$ and $\alpha, \beta, \theta\in\mathbb{R}$ such that $\beta\ne 0$, $|\alpha|+|\beta|\le 1$ and the function
$$
h(t)=\alpha+\beta\cos({\rm{arg}} (\tilde{I}(t))-\theta),\;\;t\in [0,2\pi),$$ satisfies \eqref{que1} as well as the following condition:
\begin{equation}
\label{cond b}
\int_0^{2\pi}\Big(\int_{E_t(\mu)}\mu(s)\cos({\rm{arg}} (\tilde{I}(s))-\theta)\,ds\Big)_t'\,d\varphi(t)=-\frac{\alpha}{\beta},
\end{equation}
where, as above, $\mu:=|\tilde{f}|$.

Then, the implication $(ii)\Rightarrow (i)$ holds. 

Moreover, if $f$ is of the form \eqref{partial}, where $F$ is an outer function, the converse holds as well, i.e.,  condition $(i)$ implies that there exist $\alpha, \beta, \theta\in\mathbb{R}$ such that $\beta\ne 0$, $|\alpha|+|\beta|\le 1$, and the function $h$ defined by \eqref{partial3} 
(with $I_a$ from \eqref{partial}) satisfies all the conditions from (ii).  Also, the inequalities  $\|f\pm g\|_{\hf}=1$, with $g\in \hf$ such that $\tilde{g}=h\tilde{f}$,  hold.

\end{theorem}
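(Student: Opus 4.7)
The plan is to use Theorem~\ref{propos2} together with the structural description of the multiplier class $L_f$ provided by Propositions~\ref{lemma the most general case} and~\ref{lemma partial case}. The key observation that glues them together is that, for $h$ of the trigonometric form in (ii), the quantity appearing in \eqref{que2} splits linearly as
\begin{equation*}
\Big(\int_{E_t(\mu)}\mu(s)h(s)\,ds\Big)_t' = \alpha\,\mu^*(t) + \beta\,\Big(\int_{E_t(\mu)}\mu(s)\cos({\rm arg}\tilde{I}(s)-\theta)\,ds\Big)_t',
\end{equation*}
since $\int_{E_t(\mu)}\mu(s)\,ds = \int_0^t\mu^*(s)\,ds$ by \eqref{pra2}. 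Integrating against $d\varphi$ over $[0,2\pi]$ and using $\int_0^{2\pi}\mu^*(t)\,d\varphi(t)=\|f\|_{\hf}=1$, condition \eqref{que2} becomes $\alpha + \beta\cdot J = 0$, where $J$ denotes the integral on the left of \eqref{cond b}; under $\beta\ne 0$ this is exactly \eqref{cond b}.

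For the implication $(ii)\Rightarrow (i)$, I would first invoke Proposition~\ref{lemma the most general case} for the chosen $I\in\mathcal{I}(f)$ and parameters $\alpha,\beta,\theta$ to produce $g\in H^1(\D)$ with $\tilde{g}=h\tilde{f}$, so that \eqref{que3} holds automatically. Hypothesis \eqref{que1} is assumed in (ii), and the above linearity computation, together with \eqref{cond b}, yields \eqref{que2}. It remains to observe that $h\not\equiv 0$: since $I$ is a nonconstant inner function the boundary function ${\rm arg}\tilde{I}$ is non-constant (in particular it is continuous and non-constant when $I$ is a Blaschke factor, and more generally its boundary values are dense in $[0,2\pi)$), so $\cos({\rm arg}\tilde{I}(t)-\theta)$ is non-constant and $\beta\ne 0$ forces $h$ non-constant, hence non-zero on a set of positive measure. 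Theorem~\ref{propos2} then gives $(i)$.

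For the converse $(i)\Rightarrow (ii)$ under the hypothesis $f=F\cdot I_a$ with $F$ outer, I would apply Proposition~\ref{motivation of def} to extract from $(i)$ a function $h\in L_f$ and the corresponding $g\in H^1(\D)$ with $\tilde{g}=h\tilde{f}$ and $\|f\pm g\|_{\hf}\le 1$. Here Proposition~\ref{lemma partial case} does the decisive work: it forces $h$ to be of the form \eqref{partial3} with some admissible $\alpha,\beta,\theta$ (with $\beta\ne 0$, since $h\not\equiv$ const, and $|\alpha|+|\beta|\le 1$). Theorem~\ref{propos2} now supplies \eqref{que1} and \eqref{que2}; the linearity computation described above converts \eqref{que2} into \eqref{cond b}. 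Finally, the strict equalities $\|f\pm g\|_{\hf}=1$ follow directly from formulas \eqref{spec1}--\eqref{spec1a} proved inside Theorem~\ref{propos2}, whose right-hand sides vanish once \eqref{que2} is known.

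I do not expect any serious obstacle: once Theorem~\ref{propos2} and Proposition~\ref{lemma partial case} are in place, the only technical point is the legitimacy of the differentiation step in the linearity identity above, which follows from \eqref{pra2} (the function $t\mapsto\int_{E_t(x)}|x(s)|\,ds$ is differentiable a.e.\ with derivative $x^*(t)$). The rest is bookkeeping: matching the constants between $\alpha/\beta$ and $J$, and verifying the non-triviality condition $h\not\equiv 0$ from the fact that $I$ is a nonconstant inner function.
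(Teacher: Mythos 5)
Your proposal is correct and follows essentially the same route as the paper's own proof: the linearity identity
$\bigl(\int_{E_t(\mu)}\mu h\,ds\bigr)_t'=\alpha\mu^*(t)+\beta\bigl(\int_{E_t(\mu)}\mu\cos({\rm arg}\,\tilde I-\theta)\,ds\bigr)_t'$
showing \eqref{que2} is equivalent to \eqref{cond b}, then Proposition \ref{lemma the most general case} plus Theorem \ref{propos2} for $(ii)\Rightarrow(i)$, and Theorem \ref{propos2} plus Proposition \ref{lemma partial case} for the converse, with $\|f\pm g\|_{\hf}=1$ read off from \eqref{spec1}--\eqref{spec1a}. The only (cosmetic) difference is the ordering in the converse: the paper extracts $h$ directly from Theorem \ref{propos2}, notes that \eqref{que3} (together with non-constancy, forced by \eqref{que2}) puts $h\in L_f$, and only then applies Proposition \ref{lemma partial case} to that same $h$, whereas your arrangement--Proposition \ref{motivation of def} first, Theorem \ref{propos2} afterwards--requires the reader to observe that the conclusion of Theorem \ref{propos2} (or rather its proof) applies to the specific $h$ already extracted, not merely to some other function whose existence the theorem asserts.
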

\begin{proof}
First, we prove that conditions \eqref{cond b} and \eqref{que2} are equivalent.
Indeed, by definition of the set $E_t(\mu)$, it follows
\begin{eqnarray*}
\Big(\int_{E_t(\mu)}\mu(s)h(s)\,ds\Big)_t'&=&\Big(\alpha\int_{0}^t\mu^*(s)\,ds+\beta\int_{E_t(\mu)}\mu(s)\cos({\rm{arg}} (\tilde{I}(s))-\theta)\,ds\Big)_t'\\&=& \alpha\mu^*(t)+\beta\Big(\int_{E_t(\mu)}\mu(s)\cos({\rm{arg}} (\tilde{I}(s))-\theta)\,ds\Big)_t'.
\end{eqnarray*}
Hence, the equality $\|\mu\|_{\Lambda(\varphi)}=\|f\|_{H(\Lambda(\varphi))}=1$ implies that
\begin{eqnarray*}
\int_0^{2\pi}\Big(\int_{E_t(\mu)}\mu(s)h(s)\,ds\Big)_t'\,d\varphi(t)&=& \alpha\int_0^{2\pi}\mu^*(t)\,d\varphi(t)\\&+&\beta\int_0^{2\pi}\Big(\int_{E_t(\mu)}\mu(s)\cos({\rm{arg}} (\tilde{I}(s)-\theta)\,ds\Big)_t'\,d\varphi(t)\\&=& \alpha+\beta\int_0^{2\pi}\Big(\int_{E_t(\mu)}\mu(s)\cos({\rm{arg}} (\tilde{I}(s))-\theta)\,ds\Big)_t'\,d\varphi(t),
\end{eqnarray*}
and the required equivalence follows.


We proceed now with proving the implication $(ii)\Rightarrow (i)$.
By Proposition \ref{lemma the most general case}, the function $h$ from the formulation of the theorem belongs to the set $L_f$. Therefore, since $\tilde{f}h=\tilde{g}$ for some function $g\in H^1(\mathbb{D})$, we get \eqref{que3}.

Next, since \eqref{cond b} and \eqref{que2} are equivalent, all the  conditions \eqref{que3}, \eqref{que1}, and \eqref{que2} are satisfied. Therefore, according to Theorem \ref{propos2}, $f$ is not an extreme point of the unit ball in  $H(\Lambda(\varphi))$. Thus, $(i)$ is proved.

Let us prove the second assertion of the theorem. Let $(i)$ be satisfied. 
By Theorem \ref{propos2} there exists a measurable function $h$ with values in $[-1,1]$ satisfying conditions \eqref{que3}, \eqref{que1}, and \eqref{que2}. In particular, it follows from \eqref{que3} that $h\in L_f$. Consequently, by Proposition \ref{lemma partial case}, $h$ is a function of the form \eqref{partial3} for some $\alpha, \beta,\theta\in\mathbb{R}$, $\beta\ne 0$ and $|\alpha|+|\beta|\le 1$. As was already noted in the first part of the proof, \eqref{cond b} is a consequence of \eqref{que2}. This completes the proof.
\end{proof}

From Theorem \ref{Th3a} and Corollary \ref{corollary from propos2} it follows

\begin{corollary}\label{corTh3a}
Suppose that all the conditions of Theorem \ref{Th3a} are satisfied, and, additionally, the function $\mu$ decreases on the interval $[0,2\pi)$.

Consider two conditions:

(i) $f$ is not an extreme point of the unit ball in $H(\Lambda(\varphi))$;

(ii) there exist $\alpha, \beta, \theta\in\mathbb{R}$ such that $\beta\ne 0$, $|\alpha|+|\beta|\le 1$, both functions $\mu(t)\cdot(1\pm (\alpha+\beta\cos({\rm{arg}} (\tilde{I}(t))-\theta)))$ decrease on $[0,2\pi)$ and
$$
\int_0^{2\pi}\mu(t)\cos({\rm{arg}} (\tilde{I}(t))-\theta)\,d\varphi(t)=-\frac{\alpha}{\beta}.$$

Then, the implication $(ii)\Rightarrow (i)$ holds. Moreover, if $f$ has the form \eqref{partial}, where $F$ is an outer function,   condition $(i)$ implies that there exist $\alpha, \beta, \theta\in\mathbb{R}$ such that $\beta\ne 0$, $|\alpha|+|\beta|\le 1$, and the function $h$ defined by \eqref{partial3} (with $I$ from \eqref{partial}) satisfies all the conditions from (ii).
\end{corollary}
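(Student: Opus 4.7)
The plan is to deduce the corollary directly from Theorem \ref{Th3a} by showing that, under the additional hypothesis that $\mu$ decreases on $[0,2\pi)$, condition (ii) of the corollary is just a restatement of condition (ii) of Theorem \ref{Th3a} with the same parameters $\alpha, \beta, \theta$ and the same inner function $I \in \mathcal{I}(f)$. Both ingredients required for this translation have already appeared earlier in the paper: the equivalence between \eqref{que1} and \eqref{que1a} that was established in the proof of Corollary \ref{corollary from propos2}, and a direct simplification of the inner integral in \eqref{cond b}.

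Since $\mu$ is decreasing we may take $\mu^* = \mu$, so $E_t(\mu) = (0,t)$ for every $t \in [0,2\pi]$. Setting $h(t) := \alpha + \beta\cos({\rm{arg}}(\tilde I(t))-\theta)$, the argument in the proof of Corollary \ref{corollary from propos2} shows that \eqref{que1} then reduces to the statement that both functions $\mu(t)(1\pm h(t))$ decrease on $[0,2\pi)$, which is exactly the monotonicity requirement in (ii) of the corollary. Moreover, because $E_t(\mu)=(0,t)$, the fundamental theorem of calculus gives
\[
\Big(\int_{E_t(\mu)}\mu(s)\cos({\rm{arg}}(\tilde I(s))-\theta)\,ds\Big)_t' = \mu(t)\cos({\rm{arg}}(\tilde I(t))-\theta)\quad\text{a.e.},
\]
so \eqref{cond b} collapses to the integral equation $\int_0^{2\pi}\mu(t)\cos({\rm{arg}}(\tilde I(t))-\theta)\,d\varphi(t) = -\alpha/\beta$ appearing in (ii) of the corollary.

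With these two equivalences in hand, the implication $(ii)\Rightarrow (i)$ of the corollary follows at once from the corresponding implication of Theorem \ref{Th3a}; no structural assumption on $f$ is required for this direction. For the converse, assume $f = F\cdot I_a$ with $F$ outer and that $f$ is not an extreme point of the unit ball. The second assertion of Theorem \ref{Th3a} then furnishes $\alpha,\beta,\theta\in\mathbb{R}$, with $\beta\ne 0$ and $|\alpha|+|\beta|\le 1$, such that the function $h$ defined by \eqref{partial3} (with $I_a$ from \eqref{partial}) satisfies all the conditions of (ii) in Theorem \ref{Th3a}; translating these via the two reductions above yields exactly the conditions listed in (ii) of the corollary.

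The main obstacle is purely organizational: since the analytic content has already been isolated in Corollary \ref{corollary from propos2} (through the identity \eqref{triv}) and in the derivation of \eqref{cond b} from \eqref{que2} inside the proof of Theorem \ref{Th3a}, the only care needed is to keep both directions of each equivalence in play so that neither implication in the final statement is inadvertently weakened.
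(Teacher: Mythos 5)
Your proposal is correct and follows essentially the same route as the paper, which states Corollary \ref{corTh3a} as an immediate consequence of Theorem \ref{Th3a} and Corollary \ref{corollary from propos2}: under the assumption that $\mu$ decreases one takes $\mu^*=\mu$ and $E_t(\mu)=(0,t)$, so that \eqref{que1} becomes the monotonicity of $\mu(1\pm h)$ (the equivalence \eqref{que1}$\Leftrightarrow$\eqref{que1a} from the proof of Corollary \ref{corollary from propos2}) and, via the identity \eqref{triv}, condition \eqref{cond b} collapses to the integral equation $\int_0^{2\pi}\mu(t)\cos({\rm{arg}}(\tilde I(t))-\theta)\,d\varphi(t)=-\alpha/\beta$. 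Your care in using both directions of each equivalence (the converse directions for $(ii)\Rightarrow(i)$, the forward directions for the second assertion) is exactly what the paper's citation of both results presupposes.
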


\vskip 0.4cm

\section{Main results}
\label{MAIN}

Given an inner function $I$ in the disk $\D$ and a measurable mapping $\omega:\,[0,2\pi]\to [0,2\pi]$ we introduce the following notation:
$$\xi_{I}(t):={\rm{arg}}(\tilde{I}(t)),\;\;t\in \mathbb{R},$$
$$\xi_{I,\omega}(t):=\xi_I({\omega}(t))={\rm{arg}}(\tilde{I}({\omega}(t))),\;\;t\in \mathbb{R},$$
\begin{equation}
\label{quant1}
\xi_{I,\omega}^1(u,v):=|\sin((\xi_{I,\omega}(u)-\xi_{I,\omega}(v))/2)|,\;\;u,v\in \mathbb{R},
\end{equation}
and
\begin{equation}
\label{quant2}
\xi_{I,\omega}^2(u,v,\gamma):=|\sin((\xi_{I,\omega}(u)+\xi_{I,\omega}(v))/2-\gamma)|,\;\;u,v,\gamma\in \mathbb{R}.
\end{equation}

We start the section with the following key technical result.

\begin{prop}\label{Th4a new}
Let $\nu$ be a decreasing, nonnegative function on $[0,2\pi]$ (if $\lim_{t\to +0}\nu(t)=\infty$ we put $\nu(0)=\infty$) and let $I$ be an nonconstant inner function in the disk $\D$. 
For any $a\in\mathbb{D}$ and measurable mapping $\omega:\,[0,2\pi]\to [0,2\pi]$ the following conditions are equivalent:

(a) there exists $\gamma\in \mathbb{R}$ such that for every $t\in [0,2\pi]$
\begin{equation} \label{main weight}
\limsup_{u\to t,v\to t,0<u<v\le 2\pi}\frac{\nu(v)\xi_{I,\omega}^1(u,v)\xi_{I,\omega}^2(u,v,\gamma)}{\nu(u)-\nu(v)}<\infty;
\end{equation}

(b) there exist $\alpha, \beta, \theta\in\mathbb{R}$ such that $\beta\ne 0$, $|\alpha|+|\beta|\le 1$, and both functions $\eta_{I,\omega}^+$ and $\eta_{I,\omega}^-$ defined by the equality:
$$
\eta_{I,\omega}^\pm(u):=\nu(u)\cdot(1\pm( \alpha+ \beta\cos(\xi_{I,\omega}(u)-\theta)))$$
decrease on the interval $(0,2\pi]$.

Furthermore, if  condition (a) is satisfied and $\varphi$ is a concave, increasing function on $[0,2\pi]$ such that $\int_0^{2\pi}\nu(t)\,d\varphi(t)=1$, then $\alpha, \beta, \theta\in\mathbb{R}$ can be chosen in such a way that along with monotonicity of the functions $\eta_{I,\omega}^\pm$ the following holds:
\begin{equation} \label{main weight_b}
\int_0^{2\pi}\nu(u)\cos(\xi_{I,\omega}(u)-\theta)\,d\varphi(u)=-\frac{\alpha}{\beta}.
\end{equation}

\end{prop}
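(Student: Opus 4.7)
The central tool is the trigonometric identity
\begin{equation*}
\cos(\xi_{I,\omega}(u) - \theta) - \cos(\xi_{I,\omega}(v) - \theta) = -2\sin\bigl((\xi_{I,\omega}(u) + \xi_{I,\omega}(v))/2 - \theta\bigr)\sin\bigl((\xi_{I,\omega}(u) - \xi_{I,\omega}(v))/2\bigr),
\end{equation*}
so that, setting $B(u) := \cos(\xi_{I,\omega}(u) - \theta)$ and $\Delta_\nu := \nu(u) - \nu(v)$, one has $|B(u) - B(v)| = 2\xi^1_{I,\omega}(u,v)\xi^2_{I,\omega}(u,v,\theta)$. Using the decomposition $\nu(u)B(u) - \nu(v)B(v) = \Delta_\nu B(u) + \nu(v)(B(u) - B(v))$, the pair of inequalities $\eta^\pm_{I,\omega}(u) \ge \eta^\pm_{I,\omega}(v)$ for $0 < u < v \le 2\pi$ rewrites as the single condition
\begin{equation}\label{bothmonotone}
\bigl|\alpha\Delta_\nu + \beta\Delta_\nu B(u) + \beta\nu(v)(B(u) - B(v))\bigr| \le \Delta_\nu.
\end{equation}

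For the easy direction $(b)\Rightarrow(a)$ I would deduce from \eqref{bothmonotone} the bound $|\beta|\nu(v)|B(u)-B(v)| \le (1 + |\alpha + \beta B(u)|)\Delta_\nu \le 2\Delta_\nu$, whence $\nu(v)\xi^1_{I,\omega}(u,v)\xi^2_{I,\omega}(u,v,\theta) \le \Delta_\nu/|\beta|$ uniformly on $\{\Delta_\nu > 0\}$; when $\Delta_\nu = 0$ the monotonicity forces $B(u) = B(v)$ and hence $\xi^1_{I,\omega}(u,v)\xi^2_{I,\omega}(u,v,\theta) = 0$. In either case \eqref{main weight} holds with $\gamma := \theta$.

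For the main direction $(a)\Rightarrow(b)$ I plan three stages. \emph{Stage one} (compactness): at each $t \in [0,2\pi]$ the finiteness of the limsup furnishes $\delta_t, C_t > 0$ with $\Phi_\gamma(u,v) := \nu(v)\xi^1_{I,\omega}(u,v)\xi^2_{I,\omega}(u,v,\gamma)/\Delta_\nu \le C_t$ on $(t-\delta_t, t+\delta_t) \cap (0, 2\pi]$; a Lebesgue-number argument extracts uniform $\delta, C > 0$ so that $\Phi_\gamma(u,v) \le C$ whenever $v - u < \delta$. \emph{Stage two} (chain argument, the crucial one): for arbitrary $0 < u < v \le 2\pi$ take $u = u_0 < u_1 < \cdots < u_n = v$ with $u_{i+1} - u_i < \delta$; the triangle inequality for $|B(u) - B(v)|$ yields the subadditivity
\begin{equation*}
\xi^1_{I,\omega}(u,v)\xi^2_{I,\omega}(u,v,\gamma) \le \sum_{i=0}^{n-1} \xi^1_{I,\omega}(u_i, u_{i+1})\xi^2_{I,\omega}(u_i, u_{i+1}, \gamma),
\end{equation*}
and combining this with $\nu(v) \le \nu(u_{i+1})$ (from monotonicity of $\nu$) and the local bound gives $\nu(v)\xi^1_{I,\omega}(u,v)\xi^2_{I,\omega}(u,v,\gamma) \le C\Delta_\nu$ globally on $\{0 < u < v \le 2\pi\}$; the degenerate cases $\nu(v) = 0$ or $\Delta_\nu = 0$ are immediate.

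\emph{Stage three} (assembly and the moreover part): set $\theta := \gamma$, $J := \int_0^{2\pi}\nu(u)\cos(\xi_{I,\omega}(u) - \gamma)\,d\varphi(u)$, pick $\beta > 0$ small enough that $\beta(1 + 2C + |J|) \le 1$, and put $\alpha := -\beta J$. Then $\beta \ne 0$, $|\alpha| + |\beta| = |\beta|(1 + |J|) \le 1$, condition \eqref{main weight_b} holds by construction, and \eqref{bothmonotone} follows from
\begin{equation*}
\bigl|\alpha\Delta_\nu + \beta\Delta_\nu B(u) + \beta\nu(v)(B(u) - B(v))\bigr| \le \bigl(|\alpha| + |\beta|(1 + 2C)\bigr)\Delta_\nu \le \Delta_\nu,
\end{equation*}
which is the required monotonicity of both $\eta^\pm_{I,\omega}$. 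I expect the main obstacle to be stage two: upgrading an \emph{a priori} purely pointwise local limsup condition to a uniform global bound; this relies essentially on the subadditivity of $\xi^1_{I,\omega}\xi^2_{I,\omega}(\cdot,\cdot,\gamma)$ inherited from the cosine-difference identity and on the monotonicity of $\nu$ providing $\nu(v)/\nu(u_{i+1}) \le 1$ at every intermediate step. Once the uniform bound is in hand, the remaining computations are routine algebra.
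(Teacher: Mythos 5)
Your proof is correct, and although it shares the paper's two basic ingredients --- the algebraic identity expressing $\eta^{\pm}_{I,\omega}(u)-\eta^{\pm}_{I,\omega}(v)$ through $\nu(u)-\nu(v)$ and the product $\xi^1_{I,\omega}\xi^2_{I,\omega}$ (the paper's \eqref{start1}, your cosine-difference identity), plus a compactness argument --- your handling of the crucial local-to-global step in $(a)\Rightarrow(b)$ is genuinely different. The paper never establishes a uniform bound on the ratio $\Phi_\gamma$: it chooses parameters $\alpha(t),\beta(t)$ separately on each interval of a finite subcover so that $\eta^{\pm}$ decrease there, and then globalizes the \emph{conclusion}, since monotonicity chains trivially across overlapping intervals once the final $\alpha,\beta$ are taken smaller than all the local ones. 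You instead globalize the \emph{hypothesis}: your Stage two shows that the pointwise limsup condition \eqref{main weight} self-improves to the uniform bound $\nu(v)\,\xi^1_{I,\omega}(u,v)\,\xi^2_{I,\omega}(u,v,\gamma)\le C(\nu(u)-\nu(v))$ for \emph{all} $0<u<v\le 2\pi$, via telescoping of $B(u)-B(v)$ (subadditivity of $\xi^1\xi^2$) together with $\nu(v)\le\nu(u_{i+1})$, and only then do you choose $\alpha,\beta$ once. This buys a stronger intermediate statement and a clean, explicit treatment of the ``moreover'' part ($\theta:=\gamma$, $\alpha:=-\beta J$, $\beta(1+2C+|J|)\le 1$ giving \eqref{main weight_b} by construction), where the paper is vaguer (``changing, if necessary, the values of $\alpha$, $\beta$''); what the paper's route buys is that it never needs the subadditivity observation at all. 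Your direction $(b)\Rightarrow(a)$ is the paper's computation run directly rather than by contradiction, ending with the same bound $\Delta_\nu/|\beta|$. One shared blind spot, which is not a defect relative to the original: both arguments treat the degenerate pairs with $\nu(u)=\nu(v)$ loosely (the paper relegates them to the convention of its Remark following the statement), so your parenthetical handling of $\Delta_\nu=0$ is at the same level of rigor as the paper's own.
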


\begin{remark}
Condition \eqref{main weight} is assumed to be not satisfied if $\nu(u)=\nu(v)$ for some $u,v\in (0,2\pi]$, $u<v$, or equivalently
$m\{s\in (0,2\pi]:\,\nu(s)=c\}>0$ for some $c$ (see for this case \cite{A-prep}). 

\end{remark}

\begin{proof}[Proof of Proposition \ref{Th4a new}]

For brevity, we put
$$
\xi:=\xi_{I,\omega}, \xi^1:=\xi_{I,\omega}^1, \xi^2:=\xi_{I,\omega}^2, \eta^\pm:=\eta_{I,\omega}^\pm.$$

First, observe that for any $\alpha, \beta, \theta\in\mathbb{R}$ and $0<u<v\le 2\pi$ 
\begin{eqnarray}
\eta^\pm(u)-\eta^\pm(v) &=& (\nu(u)-\nu(v))(1\pm( \alpha+ \beta\cos(\xi(u)-\theta))\nonumber\\&\pm& \beta\nu(v)(\cos(\xi(u)-\theta)-\cos(\xi(v)-\theta))\nonumber\\&=&
(\nu(u)-\nu(v))(1\pm( \alpha+ \beta\cos(\xi(u)-\theta)))\nonumber\\&\pm& 2 \beta\nu(v)\sin((\xi(v)-\xi(u))/2)\nonumber\\
&\times&\sin((\xi(v)+\xi(u))/2-\theta).
\label{start1}
\end{eqnarray}

$(a)\Rightarrow (b)$.
For definiteness, we will assume that $\nu(0)=\infty$ (if $\nu(0)<\infty$, the proof is somewhat simplified).

By the assumption, there exists $\gamma>0$ such that for each $t\in [0,2\pi]$ we can find $C_t>0$ and $\delta(t)>0$ such that the inequality
\begin{equation*}
\label{extra rel}
\nu(v)\xi^1(u,v)\xi^2(u,v,\gamma)\le C_t(\nu(u)-\nu(v))
\end{equation*}
holds for all $0<u<v\le 2\pi$ such that $u,v\in (t-\delta(t),t+\delta(t))$.  
Therefore, setting $\theta$ to be equal to $\gamma$
and taking into account the definition of $\xi^1(u,v)$ and $\xi^2(u,v,\gamma)$,
for all such $u$ and $v$ we obtain
\begin{equation*}
\nu(v)|\sin((\xi(u)-\xi(v))/2)|\cdot|\sin((\xi(v)+\xi(u))/2)-\theta)|
\le C_t(\nu(u)-\nu(v)).
\end{equation*}
In turn, combining this together with equality \eqref{start1}, for all $u,v\in (t-\delta(t),t+\delta(t))$, $0<u<v\le 2\pi$, we infer that
\begin{eqnarray*}
\eta^\pm(u)-\eta^\pm(v) &\ge& (\nu(u)-\nu(v))(1-|\alpha|- |\beta|)\\&-&2 |\beta|\nu(v)|\sin((\xi(v)-\xi(u))/2)|\\ &\times&|\sin((\xi(v)+\xi(u))/2-\theta)|
\\&\ge&
(\nu(u)-\nu(v))(1-|\alpha|- (1+2C_t)|\beta|).
\end{eqnarray*}
If the absolute values of the numbers $\alpha=\alpha(t)$ and $\beta=\beta(t)$ are sufficiently small, we have $|\alpha|+ (1+2C_t)|\beta|<1$. Then, since $\nu(u)\ge \nu(v)$, the last inequality implies that both functions $\eta^+$ and $\eta^-$ are decreasing on the set $(t-\delta(t),t+\delta(t))\cap (0,2\pi]$. In particular, this is valid for some interval $(0,\delta(0))$ with $\delta(0)>0$ whenever $\alpha(0)$ and $\beta(0)$ are sufficiently small.

Next, by the compactness, we select a finite subcovering from the open covering $\{(t-\delta(t),t+\delta(t))\}_{t\in [\delta(0),2\pi]}$ of the interval $[\delta(0),2\pi]$. Hence, the interval $[\delta(0),2\pi]$ is contained into a finite union of overlapping intervals, on each of those both functions $\eta^+$ and $\eta^-$ decrease if the corresponding values of $|\alpha|$ and $|\beta|$ are sufficiently small. Choosing $\alpha$ and $\beta$ with smaller absolute values than all of them as well as $\alpha(0)$ and $\beta(0)$ have, we get that the functions $\eta^+$ and $\eta^-$ decrease, for these values of $\alpha$ and $\beta$, on the entire interval $(0,2\pi]$.

In addition, since
$$
\Big|\int_0^{2\pi}\nu(t)\cos(\xi(u)-\theta)\,d\varphi(t)\Big|\le \int_0^{2\pi}\nu(t)\,d\varphi(t)=1,$$
changing, if necessary, the values of $\alpha$, $\beta$ (without increasing their absolute values), we obtain that
$$
\int_0^{2\pi}\nu(t)\cos(\xi(u)-\theta)\,d\varphi(t)=-\frac{\alpha}{\beta},$$
and equality \eqref{main weight_b} also follows.

$(b)\Rightarrow (a)$. Assume that both functions $\eta^+$ and $\eta^-$ decrease on the interval $(0,2\pi]$ for some $\alpha, \beta, \theta\in\mathbb{R}$, $\beta\ne 0$, $|\alpha|+|\beta|\le 1$, but condition \eqref{main weight} is not satisfied. Then there exist $t\in [0,2\pi]$ and sequences $\{u_n\}_{n=1}^\infty$, $\{v_n\}_{n=1}^\infty$ such that $0<u_n<v_n\le 2\pi$, $n=1,2,\dots$, $\lim_{n\to\infty}u_n=\lim_{n\to\infty}v_n=t$ and for all $n\in\mathbb{N}$
$$
\nu(v_n)\xi^1(u_n,v_n)\xi^2(u_n,v_n,\theta)\ge n(\nu(u_n)-\nu(v_n)),$$
or equivalently
\begin{equation} \label{main on the first}
\nu(v_n)|\sin((\xi(u_n)-\xi(v_n))/2)||\sin((\xi(u_n)+\xi(v_n))/2-\theta)|\ge n(\nu(u_n)-\nu(v_n)).
\end{equation}

On the other hand, since $|\alpha|+|\beta|\le 1$, by the monotonicity of $\eta^+$ and $\eta^-$ and by equality \eqref{start1}, for all $n\in\mathbb{N}$ and both signs we have:
\begin{eqnarray*}
\pm 2 \beta\nu(v_n)\sin((\xi(u_n)&-&\xi(v_n))/2)\sin((\xi(v_n)+\xi(u_n))/2-\theta)\le\\&\le&(\nu(u)-\nu(v))(1\pm( \alpha+ \beta\cos(\xi(u)-\theta)))
\\&\le&(\nu(u_n)-\nu(v_n))(1+|\alpha|+ |\beta|)\\&\le& 2(\nu(u_n)-\nu(v_n)).
\end{eqnarray*}
Choosing the sign on the left-hand side of this inequality in appropriate way for each $n=1,2,\dots$, we obtain that
$$
\nu(v_n)|\sin((\xi(u_n)-\xi(v_n))/2)||\sin((\xi(u_n)+\xi(v_n))/2-\theta)|\le \frac{1}{|\beta|}(\nu(u_n)-\nu(v_n)),\;\;n=1,2,\dots.
$$
Since the last inequality contradicts \eqref{main on the first}, the implication $(b)\Rightarrow (a)$ and hence the proposition are proved.

\end{proof}



\begin{theorem}\label{cor Th4a new}
Let $\varphi$ be a strictly increasing and strictly concave function on $[0,2\pi]$.
Assume that $f$ is an extreme point of the ${\rm ball}\,(H(\Lambda(\varphi)))$ such that there exists a one-to-one measure-preserving mapping $\omega:\,[0,2\pi]\to [0,2\pi]$ such that $\mu^*=\mu(\omega)$ (as above, $\mu:=|\tilde{f}|$). 

Then, for every function $I\in\mathcal{I}(f)$ and each $\gamma\in \mathbb{R}$ there exists a point $t\in [0,2\pi]$ such that
\begin{equation} \label{main weight 2}
\liminf_{u\to t,v\to t,0<u<v\le 2\pi}\frac{\mu^*(u)-\mu^*(v)}{\mu^*(v)\xi_{I,\omega}^1(u,v)\xi_{I,\omega}^2(u,v,\gamma)}=0.
\end{equation}

Moreover, if $f$ is a function of norm one in $\hf$ defined by \eqref{partial}, where $F$ is an outer analytic function and $I_a$, $a\in\D$, is a Blaschke factor, then $f$ is an extreme point of the ${\rm ball}\,(H(\Lambda(\varphi)))$ if and only if for $I=I_a$ and for each $\gamma\in \mathbb{R}$ there exists a point $t\in [0,2\pi]$ such that  \eqref{main weight 2} holds.
\end{theorem}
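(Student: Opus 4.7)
The plan is to derive Theorem \ref{cor Th4a new} as a direct consequence of Theorem \ref{Th3a}, Proposition \ref{reduction4}, and Proposition \ref{Th4a new}, chained together. The hypothesis $\mu^* = \mu(\omega)$ with $\omega$ one-to-one and measure-preserving is precisely what allows Proposition \ref{reduction4} to apply with $g := \mu$, so that conditions $(i)$ and $(iii)$ there become interchangeable; this bridge lets the analytic criterion of Theorem \ref{Th3a} be recast as the quantitative estimate of Proposition \ref{Th4a new}.

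I would prove the first assertion by contraposition. Suppose there exist $I \in \mathcal{I}(f)$ and $\gamma \in \mathbb{R}$ for which the liminf in \eqref{main weight 2} is strictly positive at every $t \in [0, 2\pi]$; equivalently, the reciprocal quotient has finite limsup at every $t$, so condition $(a)$ of Proposition \ref{Th4a new} holds for $\nu := \mu^*$ with this $\gamma$. Since $\int_0^{2\pi}\mu^*\,d\varphi = \|f\|_{\hf} = 1$, the concluding part of that proposition produces $\alpha, \beta, \theta \in \mathbb{R}$ with $\beta \ne 0$, $|\alpha| + |\beta| \le 1$, and $\theta = \gamma$, such that both $\eta_{I,\omega}^{\pm}(u) = \mu^*(u)\bigl(1 \pm (\alpha + \beta\cos(\xi_{I,\omega}(u) - \theta))\bigr)$ decrease on $(0, 2\pi]$ and identity \eqref{main weight_b} holds. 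Setting $h(t) := \alpha + \beta\cos(\xi_I(t) - \theta)$, so that $h_\omega = \alpha + \beta\cos(\xi_{I,\omega} - \theta)$, the monotonicity of $\eta_{I,\omega}^{\pm} = \mu^*(1 \pm h_\omega)$ is exactly condition $(iii)$ of Proposition \ref{reduction4}; the implication $(iii) \Rightarrow (i)$ there gives \eqref{que1}, while formula \eqref{lem eq2} identifies the integrand in \eqref{cond b} with $\mu^*(t)\cos(\xi_{I,\omega}(t) - \theta)$, turning \eqref{main weight_b} into \eqref{cond b}. Condition $(ii)$ of Theorem \ref{Th3a} is then in force, and $(ii) \Rightarrow (i)$ of that theorem shows $f$ is not extreme, contradicting the hypothesis.

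For the second assertion, with $f = F \cdot I_a$ and $F$ outer, the canonical factorization forces $\mathcal{I}(f) = \{I_a\}$ (unimodular constants, the only remaining ambiguity, leave \eqref{quant1} and \eqref{quant2} invariant), so the criterion need only be verified with $I = I_a$. One direction specializes the first assertion. For the converse I would run the chain in reverse, using the stronger conclusion of Theorem \ref{Th3a} valid under hypothesis \eqref{partial}: if $f$ is not an extreme point, that theorem produces $\alpha, \beta, \theta$ with the function $h$ of form \eqref{partial3} satisfying \eqref{que1} and \eqref{cond b}; the implication $(i) \Rightarrow (iii)$ of Proposition \ref{reduction4} converts \eqref{que1} into the monotonicity of $\eta_{I_a, \omega}^{\pm}$, i.e.\ into condition $(b)$ of Proposition \ref{Th4a new}; finally $(b) \Rightarrow (a)$ of that proposition (whose proof in effect takes $\gamma := \theta$) delivers a specific $\gamma$ for which the limsup in \eqref{main weight} is finite at every $t$, contradicting the standing assumption that for each $\gamma$ some $t$ realizes \eqref{main weight 2}.

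The main obstacle is not analytic but organizational: keeping straight the four related objects $h$, $h_\omega$, $E_t(\mu)$, and $\mu^*(1 \pm h_\omega)$, and recognizing that under $\mu^* = \mu(\omega)$ Proposition \ref{reduction4} makes all four descriptions interchangeable. Once this dictionary is fixed and one observes that the $\theta$ of Proposition \ref{Th4a new} plays the role of the $\gamma$ in \eqref{main weight 2}, the argument reduces to concatenating the implications $(i) \Leftrightarrow (iii)$ of Proposition \ref{reduction4}, $(a) \Leftrightarrow (b)$ of Proposition \ref{Th4a new}, and $(i) \Leftrightarrow (ii)$ of Theorem \ref{Th3a}.
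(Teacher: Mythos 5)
Your proposal is correct and follows essentially the same route as the paper: for the first assertion, negating \eqref{main weight 2} yields condition (a) of Proposition \ref{Th4a new}, whose conclusion (monotonicity of $\eta_{I,\omega}^{\pm}$ together with \eqref{main weight_b}) is converted via Proposition \ref{reduction4} (in particular \eqref{lem eq2}) into conditions \eqref{que1} and \eqref{cond b} of Theorem \ref{Th3a}(ii), contradicting extremality; for the second assertion, the same chain is run in reverse using the stronger converse of Theorem \ref{Th3a} available for $f=F\cdot I_a$. The only cosmetic differences are your framing by contraposition instead of contradiction and the (correct but inessential) remark that $\mathcal{I}(f)$ reduces to unimodular multiples of $I_a$.
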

\begin{proof}
We start with proving the first assertion. Let $f$ be an extreme point of the set ${\rm ball}\,(H(\Lambda(\varphi)))$, satisfying the assumptions of the theorem. To the contrary, assume that for some function $I\in\mathcal{I}(f)$ there exists $\gamma\in \mathbb{R}$ such that  relation \eqref{main weight 2} does not hold for all $t\in [0,2\pi]$. Clearly, setting $\nu=\mu^*$, we see that relation \eqref{main weight} holds for the same $I$ and $\gamma\in \mathbb{R}$ and for any $t\in [0,2\pi]$. 
Consequently, according to Proposition \ref{Th4a new}, there exist $\alpha, \beta, \theta\in\mathbb{R}$ such that $\beta\ne 0$, $|\alpha|+|\beta|\le 1$, and both functions
$$
\eta_{I,\omega}^\pm(u):=\mu^*(u)\cdot(1\pm( \alpha+ \beta\cos(\xi_{I,\omega}(u)-\theta)))$$
decrease on the interval $(0,2\pi]$ and 
\begin{equation} 
\label{main weight_b_new}
\int_0^{2\pi}\mu^*(u)\cos(\xi_{I,\omega}(u)-\theta)\,d\varphi(u)=-\frac{\alpha}{\beta}.
\end{equation}
Hence, the functions $g^*\cdot (1+h_\omega)$ and $g^*\cdot (1-h_\omega)$, where 
$$
g(s):=\mu(s),\; h(s):= \alpha+ \beta\cos(\xi_I(s)-\theta)\;\;\mbox{and}\;\;h_\omega(s):=h({\omega}(s)),$$ 
satisfy condition (iii) of Proposition \ref{reduction4} with $T=2\pi$.
Applying this proposition, we obtain that
\begin{equation*}
E_t(\mu\cdot(1\pm(\alpha+ \beta\cos(\xi_I-\theta))))=E_t(\mu),\;\;0\le t\le 2\pi,
\end{equation*}
and 
\begin{equation}
\label{main weight_b_newnew}
\Big(\int_{E_t(\mu)}\mu(s)(\alpha+ \beta\cos(\xi_I(s)-\theta))\,ds\Big)_t'=\mu^*(t)(\alpha+ \beta\cos(\xi_{I,\omega}(t)-\theta))
\end{equation}
a.e. on $[0.2\pi]$. 

On the one hand, since $\|\mu\|_{\Lambda(\varphi)}=\|f\|_{H(\Lambda(\varphi))}=1$, by definition of the set $E_t(\mu)$, we have
\begin{eqnarray*}
\int_0^{2\pi}\Big(\int_{E_t(\mu)}\mu(s)(\alpha &+& \beta\cos(\xi_I(s)-\theta))\,ds\Big)_t'\,d\varphi(t)=\alpha\int_0^{2\pi}\Big(\int_{0}^t\mu^*(s)\,ds\Big)_t'\,d\varphi(t)\\&+&\beta\int_0^{2\pi}\Big(\int_{E_t(\mu)}\mu(s)\cos(\xi_I(s)-\theta)\,ds\Big)_t'\,d\varphi(t)\\&=& \alpha+\beta\int_0^{2\pi}\Big(\int_{E_t(\mu)}\mu(s)\cos(\xi_I(s)-\theta)\,ds\Big)_t'\,d\varphi(t).
\end{eqnarray*}
On the other hand, in view of \eqref{main weight_b_new} and \eqref{main weight_b_newnew}, 
\begin{eqnarray*}
\int_0^{2\pi}\Big(\int_{E_t(\mu)}\mu(s)(\alpha &+& \beta\cos(\xi_I(s)-\theta))\,ds\Big)_t'\,d\varphi(t)=\alpha\int_0^{2\pi}\mu^*(t)\,d\varphi(t)\\&+&\beta\int_0^{2\pi}\mu^*(t)\cos(\xi_{I,\omega}(t)-\theta)\,d\varphi(t)=\alpha-\alpha= 0.
\end{eqnarray*}
From two last equalities and the fact that $\beta\ne 0$ it follows 
$$
\int_0^{2\pi}\Big(\int_{E_t(\mu)}\mu(s)\cos(\xi_I(s)-\theta)\,ds\Big)_t'\,d\varphi(t)=-\frac{\alpha}{\beta}.$$

As a result, all the conditions of part (ii) of Theorem \ref{Th3a} are satisfied, and hence $f$ is not an extreme point of the unit ball in the space $H(\Lambda(\varphi))$. Since this contradicts the assumption, the first assertion of the theorem is proved.

Let us proceed with the proof of the second assertion. Clearly, it suffices to show that every normalized in $H(\Lambda(\varphi))$ function $f$ defined by \eqref{partial} is an extreme point of the unit ball in the space $H(\Lambda(\varphi))$ whenever for $I=I_a$ and each $\gamma\in \mathbb{R}$ there exists a point $t\in [0,2\pi]$ such that  \eqref{main weight 2} holds.

Assume that is not the case, that is, there is a function $f$ that satisfies the above conditions but that is not an extreme point of the ${\rm ball}\,({\hf})$. Then, arguing similarly as in the first part of the proof and applying successively the second part of Theorem \ref{Th3a}, Propositions \ref{reduction4} and \ref{Th4a new}, we obtain a contradiction.
Indeed, first there exist $\alpha, \beta, \theta\in\mathbb{R}$ such that $\beta\ne 0$, $|\alpha|+|\beta|\le 1$, and the function $h$ defined by \eqref{partial3} satisfies all the conditions from part (ii) of Theorem \ref{Th3a}. Next, applying Proposition \ref{reduction4}, we get that both functions 
$\eta_\omega^{\pm}=\mu^*(1\pm h_\omega)$, where $\mu:=|\tilde{f}|$,  decrease on $[0,2\pi]$. Hence, by Proposition \ref{Th4a new}, if $\nu=\mu^*$ and $I=I_a$, relation \eqref{main weight} holds for some $\gamma\in \mathbb{R}$ and all $t\in [0,2\pi]$, or equivalently, relation \eqref{main weight 2} fails for all $t\in [0,2\pi]$. Since this contradicts the assumption, we done.
\end{proof}

\vskip0.4cm

For any non-negative function $\mu:\,[0,2\pi]\to\mathbb{R}$ and a mapping $\omega:\,[0,2\pi]\to[0,2\pi]$, we define two subsets of the interval $[0,2\pi]$. By $\mathcal{E}_1=\mathcal{E}_1(\mu,\omega)$ and $\mathcal{E}_2=\mathcal{E}_2(\mu,\omega)$ we denote the set of all $t\in [0,2\pi]$ such that
\begin{equation} \label{ineq5b2}
\liminf_{u\to t,v\to t,0<u<v\le 2\pi}\frac{\mu^*(u)-\mu^*(v)}{\mu^*(v)(|{\omega}(v)-{\omega}(u)|)}=0,
\end{equation}
and 
\begin{equation} \label{ineq5b_gen}
\liminf_{u\to t,v\to t,0<u<v\le 2\pi}\frac{\mu^*(u)-\mu^*(v)}{\mu^*(v)({\omega}(v)-{\omega}(u))\max(|{\omega}(u)-{\omega}(t)|,|{\omega}(v)-{\omega}(t)|)}=0,
\end{equation}
respectively. As is easy to see, $\mathcal{E}_2 \subset \mathcal{E}_1$.

Furthermore, as above, for every inner function $I$ we set $\xi_{I,\omega}(t):={\rm{arg}}(\tilde{I}({\omega}(t))$, $t\in \mathbb{R}$.
Then, given a point $t\in [0,2\pi]$ by $R_{I,\omega}(t)$ we denote the set of all $s\in [0,2\pi]$ such that 
the difference $\xi_{I,\omega}(s)-\xi_{I,\omega}(t)$ is a multiple of $\pi$ (equivalently, $\tilde{I}({\omega}(s))=\tilde{I}({\omega}(t))$ or $\tilde{I}({\omega}(s))=-\tilde{I}({\omega}(t))$).
It is clear that $t\in R_{I,\omega}(t)$ for every $t\in [0,2\pi]$ and also either $R_{I,\omega}(t_1)=R_{I,\omega}(t_2)$ or $R_{I,\omega}(t_1)\cap R_{I,\omega}(t_2)=\emptyset$.

\begin{theorem}\label{Th4a general}
Let $\varphi$ be a strictly increasing and strictly concave function on $[0,2\pi]$.
Suppose that $\|f\|_{H(\Lambda(\varphi))}=1$ and there exists a measure-preserving one-to-one mapping $\omega:\,[0,2\pi]\to [0,2\pi]$ such that $\mu^*=\mu(\omega)$, where $\mu:=|\tilde{f}|$.

Then, if $f$ is an extreme point of the unit ball in the space $H(\Lambda(\varphi))$, at least one of the following two conditions is satisfied:

(a) there are no function $I\in\mathcal{I}(f)$ and $t_0\in [0,2\pi]$ such that $\mathcal{E}_1(\mu,\omega)\subset R_{I,\omega}(t_0)$;

(b) $\mathcal{E}_2(\mu,\omega)\ne\emptyset$.

\end{theorem}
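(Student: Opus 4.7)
The plan is to argue by contraposition. Assume that both (a) and (b) fail: fix $I\in\mathcal{I}(f)$ and $t_0\in[0,2\pi]$ with $\mathcal{E}_1(\mu,\omega)\subset R_{I,\omega}(t_0)$, and suppose $\mathcal{E}_2(\mu,\omega)=\emptyset$. The aim is to run the chain Proposition~\ref{Th4a new} $\Rightarrow$ Proposition~\ref{reduction4} $\Rightarrow$ Theorem~\ref{Th3a}(ii) and conclude that $f$ is not an extreme point of ${\rm ball}\,(\hf)$. To this end, set $\nu:=\mu^*$ and $\gamma:=\xi_{I,\omega}(t_0)$; the zero set of $\sin(\xi_{I,\omega}(\cdot)-\gamma)$ is then exactly $R_{I,\omega}(t_0)$, which by hypothesis contains $\mathcal{E}_1$.

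The whole argument reduces to verifying condition (a) of Proposition~\ref{Th4a new} for this $I$ and $\gamma$, namely
$$
\limsup_{u\to t,\,v\to t,\,0<u<v\le 2\pi}
\frac{\mu^*(v)\,\xi_{I,\omega}^1(u,v)\,\xi_{I,\omega}^2(u,v,\gamma)}{\mu^*(u)-\mu^*(v)}<\infty \qquad(\star)
$$
for every $t\in[0,2\pi]$. Once $(\star)$ is established, Proposition~\ref{Th4a new} will supply parameters $\alpha,\beta,\theta\in\mathbb{R}$ with $\beta\ne 0$, $|\alpha|+|\beta|\le 1$, monotone $\eta_{I,\omega}^\pm$ on $(0,2\pi]$ and equality \eqref{main weight_b}. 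Proposition~\ref{reduction4}, applied with $g:=\mu$ and $h(s):=\alpha+\beta\cos(\xi_I(s)-\theta)$, then produces the set identities \eqref{lem eq} and the derivative formula \eqref{lem eq2}; integrating against $d\varphi$ and using $\|\mu\|_{\Lambda(\varphi)}=1$ converts \eqref{main weight_b} into condition~\eqref{cond b} of Theorem~\ref{Th3a}(ii), which yields non-extremality of $f$. This is the same mechanism used in the proof of Theorem~\ref{cor Th4a new}; the new content is the verification of $(\star)$ under the weaker hypotheses.

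For that verification I would exploit the product-to-sum identity
$$
2\,\xi_{I,\omega}^1(u,v)\,\xi_{I,\omega}^2(u,v,\gamma)=\bigl|\cos(\xi_{I,\omega}(u)-\gamma)-\cos(\xi_{I,\omega}(v)-\gamma)\bigr|
$$
and split into two cases. If $t\notin\mathcal{E}_1$, the definition of $\mathcal{E}_1$ supplies $c,\delta>0$ with $\mu^*(u)-\mu^*(v)\ge c\,\mu^*(v)|\omega(u)-\omega(v)|$ for $u,v\in(t-\delta,t+\delta)\cap(0,2\pi]$; combined with the first-order bound $|\cos(\xi_{I,\omega}(u)-\gamma)-\cos(\xi_{I,\omega}(v)-\gamma)|\lesssim|\omega(u)-\omega(v)|$ (which comes from Lemma~\ref{lemma: calcul} applied to the argument of $\tilde I$ at $\omega(t)$), this yields $(\star)$ at $t$. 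If $t\in\mathcal{E}_1$, then by hypothesis $t\in R_{I,\omega}(t_0)$, so $\cos(\xi_{I,\omega}(t)-\gamma)=\pm 1$ and $\omega(t)$ is a critical point of $\cos(\xi_I(\cdot)-\gamma)$; a Taylor expansion to second order there will give
$$
\bigl|\cos(\xi_{I,\omega}(u)-\gamma)-\cos(\xi_{I,\omega}(v)-\gamma)\bigr|\lesssim|\omega(u)-\omega(v)|\cdot\max\bigl(|\omega(u)-\omega(t)|,\,|\omega(v)-\omega(t)|\bigr),
$$
while the assumption $\mathcal{E}_2=\emptyset$ (so $t\notin\mathcal{E}_2$) supplies the matching reverse bound $\mu^*(u)-\mu^*(v)\gtrsim\mu^*(v)|\omega(u)-\omega(v)|\max(\ldots)$ for $u,v$ near $t$, again producing $(\star)$.

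The main obstacle I anticipate is the regularity of $\xi_I\circ\omega$ underlying both cases: the first-order Lipschitz estimate in Case~1 and the second-order estimate at the critical point in Case~2 both require local control on the derivative of the boundary argument of $\tilde I$ near $\omega(t)$. For the principal application—$I$ a Blaschke factor or finite Blaschke product—Lemma~\ref{lemma: calcul} provides exactly these bounds with explicit constants; for a more general inner factor $I\in\mathcal{I}(f)$ the natural plan is to replace $I$ by a Blaschke subfactor whose zero set still makes $\mathcal{E}_1\subset R_{I',\omega}(t_0)$ hold, so that the estimates from Lemma~\ref{lemma: calcul} apply. This reduction is the delicate point of the proof.
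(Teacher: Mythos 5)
Your proposal is, in substance, the paper's own proof read contrapositively. The paper argues forward: extremality plus the first assertion of Theorem~\ref{cor Th4a new} give, for every $I\in\mathcal{I}(f)$ and every $\gamma$, a point $t$ where the liminf in \eqref{main weight 2} vanishes; it then splits according to whether the set $\Phi$ of such points meets two different classes $R_{I,\omega}$ (in which case those two points lie in $\mathcal{E}_1$, by $\xi^2_{I,\omega}\le 1$ and the upper bound \eqref{main weight 2a}, so no single $t_0$ can work) or lies in one class $R_{I,\omega}(t_0)$ (in which case, taking $\gamma={\rm{arg}}\,\tilde I(\omega(t_0))$ and bounding $\xi^2_{I,\omega}$ by $2C_a\max(|\omega(u)-\omega(t_1)|,|\omega(v)-\omega(t_1)|)$ via the sine-addition inequality, the witness point lands in $\mathcal{E}_2$). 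Your Case~1 and Case~2 are exactly these two estimates run in reverse: off $\mathcal{E}_1$ the first-order bound, on $\mathcal{E}_1\subset R_{I,\omega}(t_0)$ the second-order bound combined with $\mathcal{E}_2=\emptyset$; and your concluding chain Proposition~\ref{Th4a new} $\Rightarrow$ Proposition~\ref{reduction4} $\Rightarrow$ Theorem~\ref{Th3a}(ii) is precisely the proof of the first assertion of Theorem~\ref{cor Th4a new}, whose contrapositive you could simply cite instead of re-deriving. So the mathematical content coincides.

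The one place you genuinely depart from the paper is the final paragraph, and that step would fail as described. A purely singular inner factor has no Blaschke subfactor at all; and even when $I$ has zeros, passing to a subfactor $I'$ changes the equivalence classes, so the hypothesis $\mathcal{E}_1\subset R_{I,\omega}(t_0)$ gives no control whatsoever on the sets $R_{I',\omega}$, and your contrapositive hypothesis does not transfer. You should know, however, that the paper performs no such reduction either: its proof applies Lemma~\ref{lemma: calcul} (proved only for Blaschke factors $I_a$, with explicit constants $c_a,C_a$) directly to an arbitrary $I\in\mathcal{I}(f)$, even though the Lipschitz bound on ${\rm{arg}}\,\tilde I$ is false for general inner functions (e.g.\ singular ones, whose boundary argument has unbounded derivative). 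So the regularity obstacle you flag is real, but it sits in the published argument as well; your verification of $(\star)$ is correct exactly to the same extent as the paper's cases (i)--(ii), namely whenever the inner factor witnessing the failure of (a) admits Lemma~\ref{lemma: calcul}-type bounds, as it does for Blaschke factors and finite Blaschke products.
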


 \begin{proof}
If $f$ is an outer function, we need to prove nothing, because of then $\mathcal{I}(f)=\emptyset$ and so the condition (a) holds. So, we may assume that there is a nonconstant inner function $I$ such that $f=F\cdot I$, where $F\in H^1$.  

By the assumption and Theorem \ref{cor Th4a new}, for every function $I\in\mathcal{I}(f)$ and each $\gamma\in \mathbb{R}$ there exists a point $t\in [0,2\pi]$ such that \eqref{main weight 2} holds,
where
$$
\xi_{I,\omega}^1(u,v)=|\sin(({\rm{arg}} \tilde{I}({\omega}(v))-{\rm{arg}} \tilde{I}({\omega}(u)))/2)|,$$
$$
\xi_{I,\omega}^2(u,v,\gamma)=|(\sin(({\rm{arg}} \tilde{I}({\omega}(v))+{\rm{arg}} \tilde{I}({\omega}(u)))/2-\gamma)|
$$
(see \eqref{quant1} and \eqref{quant2}). Thus, the theorem will be proved if we show that \eqref{main weight 2} implies at least one of conditions (a) and (b).

Denote by $\Phi$ the set of all $t\in [0,2\pi]$ such that \eqref{main weight 2} holds for some $\gamma\in \mathbb{R}$. Then, we have the following:

(i) $\Phi$ contains at least two points $t_1,t_2\in [0,2\pi]$ such that $R_{I,\omega}(t_1)\ne R_{I,\omega}(t_2)$

or

(ii) $\Phi\subset R_{I,\omega}(t_0)$ for some point $t_0\in [0,2\pi]$.

Assume first that condition (i) is satisfied. Then, since $0\le \xi_{I,\omega}^2(u,v,\gamma)\le 1$, by \eqref{main weight 2}, we obtain for the points $t_1$ and $t_2$ that
\begin{equation*}
\liminf_{u\to t_i,v\to t_i,0<u<v\le 2\pi}\frac{\mu^*(u)-\mu^*(v)}{\mu^*(v)\xi_{I,\omega}^1(u,v)}=0,\;\;i=1,2.
\end{equation*}
This relation combined with the inequality 
\begin{equation} \label{main weight 2a}
\xi_{I,\omega}^1(u,v)\le C_a|{\omega}(u)-{\omega}(v)|,
\end{equation}
(see Lemma \ref{lemma: calcul}) implies that $t_1,t_2\in \mathcal{E}_1$. Thus, due to the fact that $R_{I,\omega}(t_1)\ne R_{I,\omega}(t_2)$, we obtain (a).

Next, let condition (ii) hold. Then, as noted at the beginning of the proof, for $\gamma={\rm{arg}} \tilde{I}({\omega}(t_0))$ there exists a point $t_1\in [0,2\pi]$ for which \eqref{main weight 2} holds. Thus, $t_1\in\Phi$ by the definition of this set, and hence from condition (ii) it follows that $t_1\in R_{I,\omega}(t_0)$. This means that ${\rm{arg}} \tilde{I}({\omega}(t_0))={\rm{arg}} \tilde{I}({\omega}(t_1))+\pi k$ for some $k\in\mathbb{Z}$.
Thus, by elementary properties of the sine function and lemma \ref{lemma: calcul}, we obtain the following:
\begin{eqnarray*}
\xi_{I,\omega}^2(u,v,\gamma)&=&|\sin(({\rm{arg}} \tilde{I}({\omega}(v))+{\rm{arg}} \tilde{I}({\omega}(u)))/2-{\rm{arg}} \tilde{I}({\omega}(t_0)))|\\&=&
|\sin(({\rm{arg}} \tilde{I}({\omega}(v))+{\rm{arg}} \tilde{I}({\omega}(u)))/2-{\rm{arg}} \tilde{I}({\omega}(t_1)))|
\\&\le& \big|\sin\big(({\rm{arg}} \tilde{I}({\omega}(u))-{\rm{arg}} \tilde{I}({\omega}(t_1)))/2\big)\big|\\&+& \big|\sin(({\rm{arg}} \tilde{I}({\omega}(v))-{\rm{arg}} \tilde{I}({\omega}(t_1)))/2\big)\big|\\&\le&
2C_a\max(|{\omega}(u)-{\omega}(t_1)|,|{\omega}(v)-{\omega}(t_1)|).
\end{eqnarray*}
Hence, comparing relations \eqref{main weight 2} (with $\gamma={\rm{arg}} \tilde{I}({\omega}(t_0))$, $t=t_1$) and \eqref{ineq5b_gen}, as well as taking into account inequality \eqref{main weight 2a}, we conclude that $t_1\in \mathcal{E}_2$. Thus, $\mathcal{E}_2\ne\emptyset$, i.e., we have (b). This completes the proof.

\end{proof}

We will refer to points from the set $\mathcal{E}_1(\mu,\omega)$ as to "critical"\:ones for the function $\mu^*$. The following important consequence of the last theorem for non outer functions indicates that if the function $\mu^*$ has at most one "critical"\:point and $\mathcal{E}_2(\mu,\omega)=\emptyset$, then regardless of inner factors in representation \eqref{genl3repr}, the function $f$ is not an extreme point of the ${\rm ball}\,(H(\Lambda(\varphi)))$.

\begin{corollary}\label{ext of B-S-general}
Assume that the conditions of Theorem \ref{Th4a general} are satisfied.
Then, if $f$ is a non outer function, ${\rm card}\,\mathcal{E}_1(\mu,\omega)\le 1$, $\mathcal{E}_2(\mu,\omega)=\emptyset$, then $f$ is not an extrem point of the ${\rm ball}\,(H(\Lambda(\varphi)))$.
\end{corollary}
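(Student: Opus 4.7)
The plan is to derive Corollary \ref{ext of B-S-general} as a direct contrapositive of Theorem \ref{Th4a general}. That theorem asserts that if $f$ is an extreme point of ${\rm ball}\,(H(\Lambda(\varphi)))$ then at least one of the two conditions (a), (b) in its statement must hold. So I shall verify that, under the hypotheses ``$f$ non outer, ${\rm card}\,\mathcal{E}_1(\mu,\omega)\le 1$, and $\mathcal{E}_2(\mu,\omega)=\emptyset$,'' both (a) and (b) fail, and then invoke the theorem in contrapositive form.

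First, since $f$ is a non outer function in $H^1$, the inner-outer factorization gives $f=F\cdot I$ with a nonconstant inner function $I$ and an outer factor $F$. Therefore $I\in \mathcal{I}(f)$, so $\mathcal{I}(f)\neq\emptyset$. Next, by assumption $\mathcal{E}_2(\mu,\omega)=\emptyset$, which already rules out condition (b) of Theorem \ref{Th4a general}.

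The key step is to rule out (a). For the fixed inner function $I\in\mathcal{I}(f)$ above, I need to exhibit a point $t_0\in[0,2\pi]$ with $\mathcal{E}_1(\mu,\omega)\subset R_{I,\omega}(t_0)$. If $\mathcal{E}_1(\mu,\omega)=\emptyset$, the inclusion is vacuous, so any $t_0$ works. If ${\rm card}\,\mathcal{E}_1(\mu,\omega)=1$, say $\mathcal{E}_1(\mu,\omega)=\{t_1\}$, choose $t_0=t_1$; by the very definition of $R_{I,\omega}(t_1)$ (recall $\xi_{I,\omega}(t_1)-\xi_{I,\omega}(t_1)=0$ is a multiple of $\pi$), we have $t_1\in R_{I,\omega}(t_1)$ and hence $\mathcal{E}_1(\mu,\omega)\subset R_{I,\omega}(t_0)$. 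In either case the negation of (a) is established.

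With both (a) and (b) ruled out, Theorem \ref{Th4a general} forces the conclusion that $f$ cannot be an extreme point of ${\rm ball}\,(H(\Lambda(\varphi)))$, which is exactly the corollary. There is essentially no technical obstacle here: the entire content is packaged into Theorem \ref{Th4a general}, and this corollary is just a clean reformulation highlighting the role played by the cardinality of $\mathcal{E}_1$ versus emptiness of $\mathcal{E}_2$. The only item worth double-checking is the trivial-but-important observation that $t\in R_{I,\omega}(t)$ always, which is what allows the singleton case to be handled without any additional assumption on $I$.
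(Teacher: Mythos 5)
Your proposal is correct and follows essentially the same route as the paper: the non-outer hypothesis yields a nonconstant inner factor $I\in\mathcal{I}(f)$, the assumption ${\rm card}\,\mathcal{E}_1(\mu,\omega)\le 1$ together with the trivial fact $t\in R_{I,\omega}(t)$ gives $\mathcal{E}_1(\mu,\omega)\subset R_{I,\omega}(t_0)$ for some $t_0$, and then $\mathcal{E}_2(\mu,\omega)=\emptyset$ lets you apply Theorem \ref{Th4a general} in contrapositive form. The paper phrases the singleton/empty case handling via the covering $\bigcup_{t\in[0,2\pi]}R_{I,\omega}(t)=[0,2\pi]$, but this is the same observation you make explicitly.
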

\begin{proof}
Since $f$ is not an outer function, there are a nonconstant inner function $I$ and $F\in H^1$ such that $f=F\cdot I$. From the assumption that ${\rm card}\,\mathcal{E}_1(\mu,\omega)\le 1$ and the fact that $\bigcup_{t\in [0,2\pi]} R_{I,\omega}(t)=[0,2\pi]$ it follows that $\mathcal{E}_1(\mu,\omega)\subset R_{I,\omega}(t_0)$ for some $t_0\in [0,2\pi]$. Therefore, since $\mathcal{E}_2(\mu,\omega)=\emptyset$, by Theorem \ref{Th4a general}, we get the desired result.

\end{proof}


Next, we proceed with considering the more specialized case when the function $\mu:=|\tilde{f}|$ decreases on $[0,2\pi]$. 

Without loss of generality, we can assume that $\mu$ is continuous on the left at each point of the interval $[0,2\pi]$, and hence $\mu(t)=\mu^*(t)$ for all $0\le t\le 2\pi$ \cite[\S\,II.2]{krein-petunin-semenov}. Since in this case $\omega_0(t)=t$, $0\le t\le 2\pi$, 
the sets $\mathcal{E}_1(\mu,\omega_0)$ and $\mathcal{E}_2(\mu,\omega_0)$ (which will be denoted by $\mathcal{E}_1(\mu)$ and $\mathcal{E}_2(\mu)$) consist of all $t\in [0,2\pi]$ such that
\begin{equation} \label{ineq5b1}
\liminf_{u\to t,v\to t,0<u<v\le 2\pi}\frac{\mu(u)-\mu(v)}{\mu(v)(v-u)}=0
\end{equation}
and
\begin{equation} \label{ineq5b}
\liminf_{u\to t,v\to t,0<u<v\le 2\pi}\frac{\mu(u)-\mu(v)}{(v-u)\mu(v)\max(|u-t|,|v-t|)}=0,
\end{equation}
respectively. Similarly, the sets $R_{I,\omega_0}(t)$ will be denoted by $R_{I}(t)$.

The following result is an immediate consequence of Theorem \ref{Th4a general}.

\begin{corollary}\label{ext of B-S}
Let $\varphi$ be a strictly increasing and strictly concave function on $[0,2\pi]$.
Suppose that $\|f\|_{H(\Lambda(\varphi))}=1$ and $\mu:=|\tilde{f}|$ is a decreasing function on $[0,2\pi]$.

If there are an inner function $I\in \mathcal{I}(f)$ and $t_0\in [0,2\pi]$ such that $\mathcal{E}_1(\mu)\subset R_{I}(t_0)$ and $\mathcal{E}_2(\mu)=\emptyset$, then $f$ is not an extreme point of the unit ball in the space $H(\Lambda(\varphi))$.
\end{corollary}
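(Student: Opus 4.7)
The plan is to derive this corollary as the direct contrapositive of Theorem~\ref{Th4a general}, applied with $\omega$ taken to be the identity map $\omega_0(t)=t$ on $[0,2\pi]$. After possibly redefining $\mu$ on a set of measure zero so that it is left-continuous, the hypothesis that $\mu$ is decreasing yields $\mu=\mu^*$; the identity $\omega_0$ is trivially a one-to-one measure-preserving mapping of $[0,2\pi]$ onto itself, and $\mu\circ\omega_0=\mu=\mu^*$. Thus the standing hypotheses of Theorem~\ref{Th4a general} are satisfied for this choice of $\omega$.

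Next, I would verify that with $\omega=\omega_0$ the conclusion of Theorem~\ref{Th4a general} becomes precisely the statement we need. For $\omega_0=\mathrm{id}$, the defining relations \eqref{ineq5b2} and \eqref{ineq5b_gen} collapse to \eqref{ineq5b1} and \eqref{ineq5b} respectively, so $\mathcal{E}_i(\mu,\omega_0)=\mathcal{E}_i(\mu)$ for $i=1,2$; similarly, $R_{I,\omega_0}(t)=R_I(t)$ for every inner function $I$ and every $t\in[0,2\pi]$. All of these identifications are already announced in the paragraph preceding the corollary, so the verification amounts only to reading off the definitions.

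Finally, I would run the contrapositive: assuming for contradiction that $f$ is an extreme point of the unit ball, Theorem~\ref{Th4a general} forces at least one of its conditions (a), (b) to hold. The hypothesis that some $I\in\mathcal{I}(f)$ and $t_0\in[0,2\pi]$ satisfy $\mathcal{E}_1(\mu)\subset R_I(t_0)$ directly negates (a), while the hypothesis $\mathcal{E}_2(\mu)=\emptyset$ negates (b), yielding the desired contradiction and finishing the proof.

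There is essentially no substantive obstacle here; every step is either a direct translation of notation or a restatement of hypotheses, which is why the corollary is labelled as immediate. The only minor care required is the standard left-continuous normalization of the decreasing function $\mu$, which is already the convention adopted in the text and ensures the clean identity $\mu=\mu^*$ needed to invoke Theorem~\ref{Th4a general} with $\omega_0$.
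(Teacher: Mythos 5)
Your proposal is correct and follows exactly the paper's own route: the paper presents this corollary as an immediate consequence of Theorem~\ref{Th4a general}, obtained by taking $\omega$ to be the identity (after the left-continuous normalization giving $\mu=\mu^*$), identifying $\mathcal{E}_i(\mu,\omega_0)$ with $\mathcal{E}_i(\mu)$ and $R_{I,\omega_0}(t)$ with $R_I(t)$, and reading the theorem in contrapositive form. Nothing is missing; the notational identifications you verify are precisely those announced in the paragraph preceding the corollary.
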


\begin{remark}\label{ext of B-S result}
One can easily see that the assumptions of Theorem D by Bryskin-Sedaev from Subsect. \ref{bryskin-sedaev} ensure that $\mathcal{E}_1(\mu)=\emptyset$. Therefore, the latter theorem is a very special consequence of Corollary \ref{ext of B-S}. 
\end{remark}

We are able to obtain more precise results in the case when the function $f$ is the product of an outer function and a Blaschke factor. 

In the case when $\omega_0(t)=t$ and $I$ is the Blaschke factor $I_a$, $a\in\D$ (see formula \eqref{factor}), we will adopt the following notation:
$$
\xi_a^1(u,v):=\xi_{I_a,\omega_0}^1(u,v)= |\sin(({\rm{arg}} \tilde{I_a}(v)-{\rm{arg}} \tilde{I_a}(u))/2)|,$$
$$
\xi_a^2(u,v,\gamma):=\xi_{I_a,\omega_0}^2(u,v)=|(\sin(({\rm{arg}} \tilde{I_a}(v)+{\rm{arg}} \tilde{I_a}(u))/2-\gamma)|
$$
(see \eqref{quant1} and \eqref{quant2}) and
$$
R_a(t):=R_{I_a}(t),\;\;0\le t<2\pi.$$
Recall that for every $a\in\D$ and $t\in [0,2\pi]$ the set $R_{a}(t)$ consists of all $s\in [0,2\pi]$ such that the difference ${\rm{arg}}\tilde{I}_a(s)-{\rm{arg}}\tilde{I}_a(t)$ is a multiple of $\pi$.

\begin{theorem}\label{Th4a}
Let $\varphi$ be a strictly increasing and strictly concave function on $[0,2\pi]$. 
Suppose that  $f=F\cdot I_a$, where $a\in \mathbb{D}$ and $F$ is an outer analytic function  such that $\|F\|_{H(\Lambda(\varphi))}=1$ and the function $\mu:=|\tilde{F}|$ decreases on the interval $[0,2\pi]$.

Then $f$ is an extreme point of the unit ball in $H(\Lambda(\varphi))$ if and only if at least one of the following two conditions is satisfied:

(a) there is no $t_0\in [0,2\pi]$ such that $\mathcal{E}_1(\mu)\subset R_{a}(t_0)$;

(b) $\mathcal{E}_2(\mu)\ne\emptyset$.
\end{theorem}
\begin{proof}
By Theorems \ref{cor Th4a new} (see the second assertion) and
\ref{Th4a general}, it suffices to show that each of conditions (a) or (b) implies that for any $\gamma\in \mathbb{R}$ there exists a point $t\in [0,2\pi]$, for which we have
\begin{equation} \label{main weight 2new}
\liminf_{u\to t,v\to t,0<u<v\le 2\pi}\frac{\mu(u)-\mu(v)}{\mu(v)\xi_a^1(u,v)\xi_a^2(u,v,\gamma)}=0.
\end{equation}

We assume first that (a) holds, i.e., there are $t_1,t_2\in \mathcal{E}_1(\mu)$   such that $R_a(t_1)\ne R_a(t_2)$. Observe that by Lemma \ref{lemma: calcul},  for all sufficiently close $u$ and $v$, $u<v$, we have
\begin{equation} \label{lower ineq5b12}
\xi_a^1(u,v)\ge c_a(v-u).
\end{equation}
In consequence, from definition of the set $\mathcal{E}_1(\mu)$ (see \eqref{ineq5b1}) it follows that for $i=1,2$
\begin{equation} \label{ineq5b12}
\liminf_{u\to t_i,v\to t_i,0<u<v\le 2\pi}\frac{\mu(u)-\mu(v)}{\mu(v)\xi_a^1(u,v)}=0.
\end{equation}

Further, since the function $s\mapsto {\rm{arg}}\tilde{I_a}(s)$ is continuous on $\mathbb{R}$, we have
$$
\lim_{u,v\to t_i}\xi_a^2(u,v,\gamma)=|\sin({\rm{arg}} \tilde{I_a}(t_i)-\gamma)|,\;\;i=1,2,$$
for any $\gamma$. Moreover, from the assumption $R_a(t_1)\ne R_a(t_2)$ it follows that ${\rm{arg}} \tilde{I_a}(t_1)\ne{\rm{arg}} \tilde{I_a}(t_2)+\pi k$ for any $k\in\mathbb{Z}$. Hence, for each $\gamma\in\mathbb{R}$ at least one of the limits $\lim_{u,v\to t_1}\xi_a^2(u,v,\gamma)$ and $\lim_{u,v\to t_2}\xi_a^2(u,v,\gamma)$ does not vanish. Therefore, assuming, say, that $\lim_{u,v\to t_1}\xi_a^2(u,v,\gamma)\ne 0$, we get equality \eqref{main weight 2new} for this $\gamma$ and $t=t_1$ as a consequence of  equality \eqref{ineq5b12} for $i=1$.

Now let condition (b) hold. Then, there is $t\in \mathcal{E}_2(\mu)$, and hence, by  \eqref{ineq5b}, for some sequences $\{u_n\}_{n=1}^\infty$ and $\{v_n\}_{n=1}^\infty$ satisfying the conditions: $0<u_n<v_n\le 2\pi$, $n=1,2,\dots$, $\lim_{n\to\infty}u_n=\lim_{n\to\infty}v_n=t$, we have
\begin{equation}
\label{final case}
\lim_{n\to\infty}\frac{\mu(u_n)-\mu(v_n)}{(v_n-u_n)\mu(v_n)\max(|u_n-t|,|v_n-t|)}=0.
\end{equation}
If $\gamma\ne {\rm{arg}} \tilde{I_a}(t)+\pi k$ for any integer $k$, it holds
$$
\lim_{n\to\infty}\xi_a^2(u_n,v_n,\gamma)=|\sin({\rm{arg}} \tilde{I_a}(t)-\gamma)|
\ne 0,$$ 
and hence \eqref{main weight 2new} is an immediate consequence of \eqref{final case} and inequality \eqref{lower ineq5b12}. 

Thus, it remains to prove \eqref{main weight 2new} in the case when $\gamma= {\rm{arg}} \tilde{I_a}(t)+\pi k$ for some $k\in\mathbb{Z}$.
Passing to subsequences, we can assume that one of the following three conditions is satisfied: (i) $u_n\le t\le v_n$, $n=1,2,\dots$; (ii) $u_n<v_n<t$, $n=1,2,\dots$; (iii) $t<u_n<v_n$, $n=1,2,\dots$.

Assume that (i) holds. Since $u_n>0$, then in this case $t>0$. If necessary, by passing to further subsequences, we will have that $\max(|u_n-t|,|v_n-t|)=v_n-t$ or $\max(|u_n-t|,|v_n-t|)=t-u_n$, $n=1,2,\dots$. Let, say, the first inequality hold. Because $\mu(t)-\mu(v_n)\le \mu(u_n)-\mu(v_n)$ and $v_n-u_n\le 2(v_n-t)$,
from \eqref{final case} it follows that
\begin{equation}
\label{final case1}
\lim_{n\to\infty}\frac{\mu(t)-\mu(v_n)}{\mu(v_n)(v_n-t)^2}=0.
\end{equation}
Moreover, by Lemma \ref{lemma: calcul}, for sufficiently large $n$
$$
\xi_a^1(t,v_n)\ge c_a(v_n-t)$$
and
\begin{eqnarray*}
\xi_a^2(t,v_n,\gamma)&=&|\sin({\rm{arg}} \tilde{I_a}(v_n)+{\rm{arg}} \tilde{I_a}(t))/2-{\rm{arg}} \tilde{I_a}(t))|\\&=&|\sin(({\rm{arg}} \tilde{I_a}(v_n)-{\rm{arg}}\tilde{I_a}(t))/2)|\\&\ge& c_a(v_n-t).
\end{eqnarray*}
Therefore, for such $n$ we have
$$
\frac{\mu(t)-\mu(v_n)}{\mu(v_n)\xi_a^1(t,v_n)\xi_a^2(t,v_n,\gamma)}\le c_a^{-2}\frac{\mu(t)-\mu(v_n)}{\mu(v_n)(v_n-t)^2},$$
and hence \eqref{main weight 2new} follows from \eqref{final case1}.

If $\max(|u_n-t|,|v_n-t|)=t-u_n$, $n=1,2,\dots$, then exactly in the same way, as above, from the inequalities
$\mu(u_n)-\mu(t)\le \mu(u_n)-\mu(v_n)$, $v_n-u_n\le 2(t-u_n)$, \eqref{final case} and the fact that $\lim_{n\to\infty}\mu(v_n)\le\mu(t)<\infty$, it follows that
$$
\lim_{n\to\infty}\frac{\mu(u_n)-\mu(t)}{(t-u_n)^2}=0.
$$
Furthermore, again for sufficiently large $n$
$$
\xi_a^1(u_n,t,\gamma)\xi_a^2(u_n,t,\gamma)\ge c_a^2(u_n-t)^2,$$
whence
$$
\frac{\mu(u_n)-\mu(t)}{\mu(t)\xi_a^1(u_n,t)\xi_a^2(u_n,t,\gamma)}\le c_a^{-2}\cdot \frac{\mu(u_n)-\mu(t)}{\mu(t)(t-u_n)^2}.$$
Combining this together with the last equality, we obtain \eqref{main weight 2new}.

Let us proceed with the case (ii). As was already noted, the function 
$t\mapsto{\rm{arg}} \tilde{I_a}(t)$ is a continuous and one-to-one mapping from $\R$ onto $\R$. Therefore, for any real $u$ and $v$ such that $u<v<t$, one of two following inequalities holds:
\begin{equation}
\label{final alternative}
{\rm{arg}} \tilde{I_a}(u)<{\rm{arg}} \tilde{I_a}(v)<{\rm{arg}} \tilde{I_a}(t)\;\;\mbox{or}\;\;{\rm{arg}} \tilde{I_a}(u)>{\rm{arg}} \tilde{I_a}(v)>{\rm{arg}} \tilde{I_a}(t).
\end{equation}
Assuming, for instance, that the first of them is satisfied, we obtain
$$
{\rm{arg}} \tilde{I_a}(u_n)<{\rm{arg}} \tilde{I_a}(v_n)<{\rm{arg}} \tilde{I_a}(t), \;\;n=1,2,\dots,$$
whence for sufficiently large $n$, by Lemma \ref{lemma: calcul},
\begin{equation}
\label{final lower est1}
\xi_a^1(u_n,v_n)\ge c_a(v_n-u_n)
\end{equation}
and
\begin{eqnarray}
\xi_a^2(u_n,v_n,\gamma)&=&|\sin({\rm{arg}} \tilde{I_a}(u_n)+{\rm{arg}} \tilde{I_a}(v_n))/2-{\rm{arg}} \tilde{I_a}(t))|\nonumber\\&=&\sin({\rm{arg}} \tilde{I_a}(t)-({\rm{arg}} \tilde{I_a}(u_n)+{\rm{arg}} \tilde{I_a}(v_n))/2))\nonumber\\&\ge&\sin(({\rm{arg}} \tilde{I_a}(t)-{\rm{arg}}\tilde{I_a}(u_n))/2)\nonumber\\&\ge& c_a(t-u_n)=c_a\max(t-v_n,t-u_n).
\label{final lower est2}
\end{eqnarray}
Thus, for such $n$ we have
$$
\frac{\mu(u_n)-\mu(v_n)}{\mu(v_n)\xi_a^1(u_n,v_n)\xi_a^2(u_n,v_n,\gamma)}\le c_a^{-2}\frac{\mu(u_n)-\mu(v_n)}{(v_n-u_n)\mu(v_n)\max(|u_n-t|,|v_n-t|)},
$$
and now \eqref{main weight 2new} follows from \eqref{final case}.

The proof of estimates \eqref{final lower est1} and \eqref{final lower est2}
in the case when the second inequality in \eqref{final alternative} is satisfied and also  in the case (iii) can be carried in the same way, and hence we skip this. Thus, since \eqref{main weight 2new} is an immediate consequence of \eqref{final case}, \eqref{final lower est1}, and \eqref{final lower est2}, the proof is completed.


\end{proof}

Since the function $\tilde{I}_a:\,[0,2\pi)\to \T$ is a one-to-one mapping and $\tilde{I}_a(2\pi)=\tilde{I}_a(0)$, then each of the sets $R_{a,\omega}(t)$ consists of two or three points. In particular, ${\rm card}\,R_{a,\omega}(t)=3$ if and only if $0\in R_{a,\omega}(t)$.
Therefore, applying Theorem \ref{Th4a} and well-known properties of the linear-fractional automorphisms of the unit disk, we obtain the following statements, which show that geometric properties of a function $f$ of the form \eqref{partial}, such that the function $|\tilde{F}|$ decreases, depend, first of all, on the number and the structure of "critical"\:points of $|\tilde{F}|$, and, possibly, secondarily, on the choice of the point $a\in\D$. 

\begin{corollary}\label{cor for Th4a}
Assume that all the conditions of Theorem \ref{Th4a} are fulfilled.

Then, if $f_a(z):=F(z)I_a(z)$, where $a\in\mathcal{D}$, and $\mu:=|\tilde{F}|$, we have the following:

(i) if ${\rm card}\,\mathcal{E}_1(\mu)\ge 4$ or $\mathcal{E}_2(\mu)\ne \emptyset$, then $f_a$ is an extreme point of the unit ball of $H(\Lambda(\varphi))$ for any $a\in\mathcal{D}$;

(ii) if ${\rm card}\,\mathcal{E}_1(\mu)\le 1$ and $\mathcal{E}_2(\mu)=\emptyset$, then $f_a$ is not an extreme point of the unit ball of $H(\Lambda(\varphi))$ for any $a\in\mathcal{D}$;

(iii) 
if ${\rm card}\,\mathcal{E}_1(\mu)=2$ and
$\mathcal{E}_2(\mu)=\emptyset$, then there exist $a_j\in\mathcal{D}$, $j=1,2$, such that $f_{a_1}$ is an extreme point of the unit ball of $H(\Lambda(\varphi))$, but $f_{a_2}$ is not.

\end{corollary}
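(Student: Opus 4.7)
My plan is to deduce the corollary from Theorem~\ref{Th4a} combined with the observation (recorded just before the statement) that ${\rm card}\,R_a(t)\le 3$ for every $a\in\D$ and every $t\in[0,2\pi]$. By Theorem~\ref{Th4a}, $f_a$ fails to be extreme in ${\rm ball}\,(H(\Lambda(\varphi)))$ if and only if both conditions (a) and (b) of that theorem fail, i.e., $\mathcal{E}_2(\mu)=\emptyset$ and there exists $t_0\in[0,2\pi]$ with $\mathcal{E}_1(\mu)\subset R_a(t_0)$.

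Parts (i) and (ii) are then almost immediate. In (i), if $\mathcal{E}_2(\mu)\ne\emptyset$ condition (b) holds automatically and $f_a$ is extreme for every $a$; and if ${\rm card}\,\mathcal{E}_1(\mu)\ge 4$ the bound $|R_a(t_0)|\le 3$ makes the inclusion $\mathcal{E}_1(\mu)\subset R_a(t_0)$ impossible for every $a$ and every $t_0$, so condition (a) holds uniformly in $a$. In (ii), taking $t_0$ to be the single point of $\mathcal{E}_1(\mu)$ (or any $t_0$ if $\mathcal{E}_1(\mu)=\emptyset$) gives $\mathcal{E}_1(\mu)\subset\{t_0\}\subset R_a(t_0)$, so (a) fails for every $a$; combined with $\mathcal{E}_2(\mu)=\emptyset$, Theorem~\ref{Th4a} shows $f_a$ is not extreme for every $a$.

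For (iii), write $\mathcal{E}_1(\mu)=\{t_1,t_2\}$ with $t_1,t_2$ representing distinct points of $\T$. Since $|R_a(t_0)|\le 3$, the inclusion $\mathcal{E}_1(\mu)\subset R_a(t_0)$ for some $t_0$ is equivalent to $t_2\in R_a(t_1)$, i.e., to $\tilde{I}_a(t_2)=\pm\tilde{I}_a(t_1)$; and injectivity of $\tilde{I}_a$ on $\T$ rules out the $+$ sign. Substituting the defining formula \eqref{factor} and clearing denominators, I expect the condition $\tilde{I}_a(t_2)=-\tilde{I}_a(t_1)$ to reduce to the single real equation
\[
2\,\Re\bigl(a\,e^{-i(t_1+t_2)/2}\bigr)=(1+|a|^2)\cos\bigl((t_1-t_2)/2\bigr),\qquad(\ast)
\]
in the unknown $a\in\D$. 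Because $t_1\not\equiv t_2\pmod{2\pi}$ we have $\cos((t_1-t_2)/2)\in(-1,1)$, so restricting to $a=r\,e^{i(t_1+t_2)/2}$ with real $r\in(-1,1)$ turns $(\ast)$ into $2r=(1+r^2)\cos((t_1-t_2)/2)$, which always admits a root in $(-1,1)$; this supplies the non-extreme example $a_2$. Conversely, $(\ast)$ cuts out a proper (one-dimensional real-analytic) subset of $\D$, so its complement is nonempty: $a_1=0$ works whenever $\cos((t_1-t_2)/2)\ne 0$, and a small perturbation of $0$ along $e^{i(t_1+t_2)/2}$ handles the remaining case, supplying the extreme example.

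The principal obstacle is in part (iii): deriving the clean scalar equation $(\ast)$ equivalent to $\tilde{I}_a(t_2)=-\tilde{I}_a(t_1)$, and then verifying that $(\ast)$ has both solutions and non-solutions strictly inside $\D$ under the standing hypotheses. The remaining parts amount to counting elements of $\mathcal{E}_1(\mu)$ and applying the bound $|R_a(t)|\le 3$.
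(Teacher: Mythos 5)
Your proposal is correct and follows essentially the same route as the paper, which offers no detailed proof but only a sketch (Theorem \ref{Th4a} combined with the observation that ${\rm card}\,R_a(t)\le 3$ and ``well-known properties of the linear-fractional automorphisms''); your equation $(\ast)$ is exactly the right explicit form of that last ingredient — expanding $(a-e^{it_1})(1-\bar a e^{it_2})+(a-e^{it_2})(1-\bar a e^{it_1})=0$ and multiplying by $e^{-i(t_1+t_2)/2}$ gives precisely $2\,\Re\bigl(a e^{-i(t_1+t_2)/2}\bigr)=(1+|a|^2)\cos((t_1-t_2)/2)$, and the root $r=\bigl(1-\sqrt{1-c^2}\,\bigr)/c$, $c=\cos((t_1-t_2)/2)\in(-1,1)$, does lie in $(-1,1)$, so both the non-extreme and the extreme examples exist as you claim. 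One remark: your hypothesis that $t_1,t_2$ represent distinct points of the circle is not cosmetic, since if $\mathcal{E}_1(\mu)=\{0,2\pi\}$ (two points of $[0,2\pi]$ but one point of $\mathbb{T}$) then $\mathcal{E}_1(\mu)\subset R_a(0)$ for every $a\in\mathbb{D}$ and no extreme example $a_1$ can exist, so in that degenerate case you are making the same tacit identification of $0$ with $2\pi$ that the paper itself makes.
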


Observe that the set of outer analytic functions that satisfy each one of the conditions (i), (ii) and (iii), is non-empty.
Indeed, for any symmetric space $X$ on $[0,2\pi]$, each $\mu\in X$ such that $\mu\ge 0$ and $\ln\mu\in L^1[0,2\pi]$, the analytic function
$$
F(z):=\exp\left\{\frac1{2\pi}\int_0^{2\pi}\frac{e^{it}+z}{e^{it}-z}\ln\mu(t)\,dt+i\lambda\right\},\;\;z\in\D,$$
where $\lambda\in\mathbb{R}$, is outer, $|\tilde{F}|=\mu$ (see, e.g., \cite[Chapter 5]{hoffman}), and hence $\tilde{F}\in X$.

\begin{remark}
Recall that $\lambda\in [-\infty,\infty]$ is a {\it derivative number} of a function ${\mu}:\,[0,T]\to\mathbb{R}$ at $t_0\in[0,T]$ if for some sequence $t_k\to t_0$ for $k\to\infty$
$$
\lim_{k\to\infty}\frac{{\mu}(t_k)-{\mu}(t_0)}{t_k-t_0}=\lambda.$$
It is clear that if $0$ is a derivative number of a decreasing positive function $\mu$ at $t_0$, it follows that $t_0\in \mathcal{E}_1({\mu}).$ The converse is generally not true: there exists a strictly decreasing function ${\mu}$ such that all its derivative numbers at each point of the interval $[0,T]$ are negative, but however $\mathcal{E}_1({\mu})$ contains as many elements as desired. In particular, in this case the function $f_a$ from the last corollary may or may not be an extreme point of the unit ball of the space $H(\Lambda(\varphi))$ (depending on what the sets $\mathcal{E}_1(\mu)$, $\mathcal{E}_2(\mu)$ and the point $a\in\mathcal{D}$).
\end{remark}

\vspace{3em}
The author is grateful to A.D. Baranov and I.R. Kayumov for useful discussions of the issues considered in this paper.

\newpage



\end{document}